\documentclass{amsart}
\usepackage{amssymb,wasysym,url}
\usepackage[all]{xy}

\newcommand{\gr}{{\operatorname{gr}\nolimits}}

\newcommand{\kar}{{\operatorname{char}\,}}

\renewcommand{\Im}{\operatorname{Im}\nolimits}

\newcommand{\rrad}{\mathfrak{r}}

\newcommand{\gldim}{\operatorname{gldim}\nolimits}

\newcommand{\Ext}{\operatorname{Ext}\nolimits}

\newcommand{\op}{{\operatorname{op}\nolimits}}

\renewcommand{\L}{\Lambda}
\newcommand{\Z}{{\mathbb Z}}
\newcommand{\A}{{\mathcal A}}

\newcommand{\HH}{\operatorname{HH}\nolimits}

\newtheorem{lem}{Lemma}[section]
\newtheorem{prop}[lem]{Proposition}

\newtheorem{thm}[lem]{Theorem}
\newtheorem*{lemma}{Lemma}
\newtheorem*{theorem}{Theorem}
\theoremstyle{definition}

\newtheorem*{remark}{Remark}

%\CompileMatrices

\begin{document}

\title[Radical cube zero weakly symmetric algebras]
{Radical cube zero weakly symmetric algebras and support varieties} 
\author[Erdmann]{Karin Erdmann}
\address{Karin Erdmann\\
Mathematical Institute\\ 
24--29 St.\ Giles\\
Oxford OX1 3LB\\ 
England}
\email{erdmann@maths.ox.ac.uk}
\author[Solberg]{\O yvind Solberg}
\address{\O yvind Solberg\\
Institutt for matematiske fag\\
NTNU\\ 
N--7491 Trondheim\\ 
Norway}
\email{Oyvind.Solberg@math.ntnu.no}
\thanks{The authors acknowledge support from EPSRC grant
  EP/D077656/1 and NFR Storforsk grant no.\ 167130.} 

\begin{abstract}
One of our main results is a classification all the weakly symmetric
radical cube zero finite dimensional algebras over an algebraically
closed field having a theory of support via the Hochschild cohomology
ring satisfying Dade's Lemma. Along the way we give a characterization
of when a finite dimensional Koszul algebra has such a theory of
support in terms of the graded centre of the Koszul dual.
\end{abstract}
\keywords{Weakly symmetric algebras, support varieties, Koszul
  algebras} 
\subjclass[2000]{16P10, 16P20, 16E40, 16G20; Secondary: 16S37}

\date{\today}
\maketitle

\section*{Introduction}

The support variety of a module, if it exists, is a powerful
invariant. For group algebras of finite groups, or cocommutative Hopf
algebras it is defined in terms of the maximal ideal spectrum of the
group cohomology ring. For a more general finite-dimensional algebra
$\Lambda$, instead of group cohomology one can take a subalgebra of
the Hochschild cohomology, provided it has suitable finite generation
properties (see Section \ref{section:1} for details). It was shown in
\cite{EHSST}, that if these hold and $\Lambda$ is self-injective, then
many of the standard results from the theory of support varieties for
finite groups generalize to this situation.

The question is now to understand whether or not these finite
generation properties hold for given classes of algebras.  Since
Hochschild cohomology is difficult to calculate explicitly, one would
rather not do this and have other ways to detect finite generation.

In this paper we present a method, for Koszul algebras, which gives a
criterion in terms of the Koszul dual, to show that the finite
generation condition holds. We denote this condition by
\textbf{(Fg)}. We apply this method to a general weakly symmetric
algebra with radical cube zero, and also to quantum exterior algebras.

In \cite{B}, D.\ Benson characterized the rate of growth of
resolutions for weakly symmetric algebras with radical cube zero.  We
use his results, and we show that almost all such algebras which are
of tame representation type satisfy the finite generation
hypothesis. There are only two exceptional cases, where a deformation
parameter $q$ appears and whenever $q$ is not a root of unity. It is
clear from \cite{B} that the algebras in Benson's list which have wild
type, cannot satisfy the finite generation condition. Using \cite{D}
it follows easily that all weakly symmetric (in fact all
selfinjective) algebras of finite representation type over an
algebraically closed field satisfy the finite generation
condition. The precise answer is the following (see Theorem
\ref{solberg-thm:benson} for the definition of the quivers involved).
\begin{theorem}
Let $\Lambda$ be a finite dimensional symmetric algebra over an
algebraically closed field with radical cube zero and radical square
non-zero. Then $\Lambda$ satisfies \emph{\textbf{(Fg)}} if and only if
$\Lambda$ is of finite representation type, $\Lambda$ is of type
$\widetilde{\mathbb{D}}_n$ for $n\geq 4$, $\widetilde{Z}_n$ for $n>0$,
$\widetilde{D\mathbb{Z}}_n$, $\widetilde{\mathbb{E}}_6$,
$\widetilde{\mathbb{E}}_7$, $\widetilde{\mathbb{E}}_8$, or $\Lambda$
is of type $\widetilde{\Z}_0$ or $\widetilde{\mathbb{A}}_n$ when $q$ is
a root of unity.
\end{theorem}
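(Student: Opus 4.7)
My plan is to split the analysis according to representation type and apply the paper's Koszul-duality criterion. First, since $\Lambda$ is weakly symmetric with radical cube zero and radical square non-zero, it is naturally graded (by radical powers) and Koszul, so the criterion from Section \ref{section:1} applies: $\Lambda$ satisfies \textbf{(Fg)} iff a suitable finitely generated subalgebra of the graded centre $Z_{\gr}(E(\Lambda))$ of the Koszul dual acts on $\Ext_\Lambda^*(\Lambda/\rad\Lambda,\Lambda/\rad\Lambda)$ with the required finiteness properties. This reduces the question, for each Benson-type algebra, to a computation in $E(\Lambda)$, which is much more tractable since $E(\Lambda)$ is presented by explicit quadratic relations determined by the quiver and relations of $\Lambda$.

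Next I would dispose of the easy extremes. For $\Lambda$ of finite representation type, Dugas's result \cite{D} gives \textbf{(Fg)} directly, handling one part of the list. At the other extreme, Benson \cite{B} shows that all radical cube zero weakly symmetric algebras outside the types named in the theorem are wild and have resolutions of exponential growth. Since \textbf{(Fg)} forces complexity of modules to be bounded by the Krull dimension of the cohomological support algebra, and hence implies polynomial growth, \textbf{(Fg)} must fail in all wild cases. This kills every type not mentioned in the theorem and also kills the parameter cases $\widetilde{\Z}_0$ and $\widetilde{\mathbb{A}}_n$ when $q$ is not a root of unity, since Benson shows the resolutions have exponential growth in that case too.

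The remaining, and main, work is to verify \textbf{(Fg)} positively for each of the tame types $\widetilde{\mathbb{D}}_n$ $(n\ge 4)$, $\widetilde{Z}_n$ $(n>0)$, $\widetilde{D\mathbb{Z}}_n$, $\widetilde{\mathbb{E}}_6$, $\widetilde{\mathbb{E}}_7$, $\widetilde{\mathbb{E}}_8$, together with $\widetilde{\Z}_0$ and $\widetilde{\mathbb{A}}_n$ at roots of unity. For each such algebra I would write down the Koszul dual $E(\Lambda)$ explicitly from the quiver and the symmetric socle form, exhibit central homogeneous elements in $E(\Lambda)$ of low degree (typically degree $2$, corresponding to cocycles picking up loops around the quiver or products of arrows summed around cycles), and verify that the subalgebra of $Z_{\gr}(E(\Lambda))$ they generate is Noetherian and makes $E(\Lambda)$ module-finite over it. The Koszul criterion from Section \ref{section:1} then translates this into \textbf{(Fg)} for $\Lambda$. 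In the root-of-unity parameter cases one checks that a suitable power of the deformation parameter appears in the relations so that the relevant central element still exists; when $q$ is not a root of unity the candidate central elements fail to be central, matching the negative direction already forced by growth.

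The main obstacle is the case-by-case verification in the tame part: one needs to guess the correct central elements of the Koszul dual for each Dynkin-like diagram, and to show module-finiteness of $E(\Lambda)$ over the subalgebra they generate. The $\widetilde{\mathbb{E}}$-type quivers and the mixed type $\widetilde{D\mathbb{Z}}_n$ are the most delicate, because the Koszul dual is a fairly large quadratic algebra with many paths, and confirming centrality requires a careful identification of the socle pairing used to define the symmetric form on $\Lambda$. Everything else — finite type via \cite{D}, wild cases and non-root-of-unity parameters via growth \cite{B} — is then bookkeeping.
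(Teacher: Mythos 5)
Your overall strategy matches the paper's: invoke Dugas \cite{D} for finite type, use the Koszul criterion (Theorem \ref{thm:Fgcharacterization}) to reduce the question to the graded centre of the Koszul dual, rule out wild type via the complexity bound from \cite{EHSST}, and then do a case-by-case verification in the tame Euclidean cases by exhibiting explicit central elements and proving module-finiteness. The reduction itself, though, needs a small justification you elide: a weakly symmetric radical cube zero algebra is Koszul exactly when it has infinite representation type (the cited result of Guo and Mart\'inez-Villa), so the Koszul dual criterion applies to the tame and wild cases but not to the finite ones -- which is why Dugas is needed there.

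There is one genuine error in your treatment of the parameter cases. You dispose of $\widetilde{\Z}_0$ and $\widetilde{\mathbb{A}}_n$ with $q$ not a root of unity by claiming Benson shows those resolutions have exponential growth. That is false: for those types the adjacency matrix has largest eigenvalue $2$ regardless of $q$, so the Betti numbers grow at most linearly in every case. Exponential growth occurs only in the wild ($\lambda>2$) cases. The correct obstruction, and the one the paper proves, is that when $q$ is not a root of unity the graded centre $Z_{\gr}(E(\Lambda))$ collapses (to $k$ in the $\widetilde{\mathbb{A}}_n$ case, and to a subalgebra over which $E(\Lambda)$ is infinitely generated in the quantum exterior case), so the module-finiteness in part (a) of Theorem \ref{thm:Fgcharacterization} fails. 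In fact these examples are precisely the interesting ones: tame algebras with polynomially bounded Betti numbers that nevertheless fail \textbf{(Fg)}, which shows growth alone cannot decide the question. You partially acknowledge this ("the candidate central elements fail to be central") but then contradict yourself by saying the failure is "already forced by growth." It is not.

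A smaller imprecision: you suggest the central elements to exhibit are "typically degree $2$." In the $\widetilde{\mathbb{D}}_n$, $\widetilde{D\mathbb{Z}}_n$ and $\widetilde{\mathbb{E}}$ cases they are considerably larger -- for $\widetilde{\mathbb{E}}_8$ the paper produces central elements of degrees $12$ and $20$, and proving centrality requires a substantial chain of relations (the identities $(\dagger)$, $(\ddagger)$, $(\bigtriangleup)$, etc.). Underestimating the degree would make the guess-and-check step you describe fail; the degree is in fact governed by $\Omega$-periodicities of periodic modules, as remarked at the end of Section~1.
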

In addition we show that a quantum exterior algebra satisfies
\textbf{(Fg)} if and only if all deformation parameters are roots of
unity.  This is generalized in \cite{BO}. Furthermore, this result,
together with \cite{EHSST} also generalizes almost all of Theorem 2.2
in \cite{E}.

As observed in \cite{BGMS}, even ``nice'' selfinjective algebras do
not necessarily satisfy \textbf{(Fg)}, in spite of sharing many of the
same representation structural properties with algebras having
\textbf{(Fg)}.  Hence, whether or not a finite dimensional algebra has
the property \textbf{(Fg)} is a more subtle question than one first
would expect. However, the list of classes of algebras satisfying
\textbf{(Fg)} is quite extensive: (i) any block of a group ring of a
finite group \cite{Ev,Go,V}, (ii) any block of a finite dimensional
cocommutative Hopf algebra \cite{FrS}, (iii) in the commutative
setting for a complete intersection \cite{Gu}, (iv) any exterior
algebra, (v) all finite dimensional selfinjective algebras over an
algebraically closed field of finite representation type \cite{D},
(vi) quantum complete intersections \cite{BO}, (vii) all Gorenstein
Nakayama algebras (announced by H.\ Nagase), (viii) any finite
dimensional pointed Hopf algebra, having abelian group of group-like
elements, under some mild restrictions on the group order \cite{MPSW}.

A weakly symmetric algebra is, in particular, a finite dimensional
selfinjective algebra. In \cite{Sm} it is shown that a finite
dimensional Koszul algebra $\L$ over a field $k$ with degree zero part
isomorphic to $k$, is selfinjective with finite complexity if and only
if the Koszul dual $E(\L)$ is an Artin-Schelter regular Koszul
algebra. This was extended in \cite{MV2} to finite dimensional Koszul
algebras $\L$ over a field $k$ with $\L_0\simeq k^n$ for some positive
integer $n$. Then it is natural to say that a (non-connected) Koszul
$k$-algebra $R=\oplus_{i\geq 0}R_i$ is an \emph{Artin-Schelter regular
  algebra of dimension $d$}, if
\begin{enumerate}
\item[(i)] $\dim_k R_i<\infty$ for all $i\geq 0$, 
\item[(ii)] $R_0\simeq k^n$ for some positive integer $n$, 
\item[(iii)] $\gldim R=d$,
\item[(iv)] the Gelfand-Kirillov dimension of $R$ is finite, 
\item[(v)] for all simple graded $R$-modules $S$ we have
  \[\Ext^i_R(S,R)\simeq \begin{cases} (0), & i\neq d,\notag\\ S', & i=d
  \text{\ and some simple graded $R^\op$-module $S'$}.\notag
\end{cases}\]
\end{enumerate}
As in \cite{M} it follows that classifying all selfinjective Koszul
algebras of finite complexity $d$ and Loewy length $m+1$ (up to
isomorphism) is the same as classifying Artin-Schelter regular Koszul
algebras with Gelfand-Kirillov dimension $d$ and global dimension $m$
(up to isomorphism). Moreover, if $R$ is such an Artin-Schelter
regular Koszul algebra, then $\Ext^d_{R^\op}(\Ext^d_R(S,R),R)\simeq S$
and consequently $\Ext^d_R(S,-)$ is a ``permutation'' of the simple
graded $R$- and $R^\op$-modules. The selfinjective algebra $\L$ being
weakly symmetric, corresponds to the property that $R=E(\L)$, the
Koszul dual of $\L$, satisfies $\Ext^d_R(S,R)\simeq S^\op$ for all
simple graded $R$-modules $S$. Hence, the weakly symmetric algebras
with radical cube zero found in \cite{B} of finite complexity classify
all the Artin-Schelter regular Koszul algebras of dimension $2$, where
$\Ext^2_R(-,R)$ is the ``identity permutation''.  Within this class of
algebras, we classify those that are finitely generated modules over
their centres, such that $\Ext^2_R(-,R)$ is the identity permutation.
A similar classification is also carried out for Artin-Schelter
regular Koszul algebras of dimension $1$ (see Proposition
\ref{prop:radical-square-zero}). These algebras correspond to radical
square zero algebras, so after having stated this result, we exclude
this class of algebras from the discussion of weakly symmetric radical
cube zero algebras.

An early version of this paper was called \emph{Finite generation of
the Hochschild cohomology ring of some Koszul algebras}. All the
results that existed in that early version are included in full in the
current paper. 

The authors acknowledge the use of the Gr\"obner basis program GRB by
E.\ L.\ Green \cite{Gr} in the experimental stages of this paper. 

\section{Background and preliminary results}\label{section:1}

Throughout let $\L$ be a finite dimensional algebra over an
algebraically closed field $k$ with Jacobson radical
$\mathfrak{r}$. Cohomological support varieties for finite dimensional
modules over $\L$ using the Hochschild cohomology ring $\HH^*(\L)$ of
$\L$ were introduced in \cite{SS} and further studied in
\cite{EHSST}. It follows that $\L$ has a good theory of cohomological
support varieties via $\HH^*(\L)$ of $\L$, if $\HH^*(\L)$ is
Noetherian and $\Ext^*_\L(\L/\rrad,\L/\rrad)$ is a finitely generated
$\HH^*(\L)$-module.  Denote this condition by \textbf{(Fg)}. The aim
of this paper is to characterize when a weakly symmetric algebra $\L$
with $\rrad^3=(0)$ satisfies \textbf{(Fg)}.

For the algebras $\L$ we consider, it is well-known that
$\L\simeq kQ/I$ for some finite quiver $Q$ and some ideal $I$ in
$kQ$, up to Morita equivalence. Furthermore there is a homomorphism of
graded rings
\[\varphi_M\colon \HH^*(\L)\to
\Ext^*_\L(M,M)=\oplus_{i\geq
  0}\Ext^i_\L(M,M)\] 
\sloppy for all $\L$-modules $M$, with $\Im\varphi_M
\subseteq Z_{\gr}(\Ext^*_\L(M,M))$
(see \cite{SS,Y}).  Here
$Z_{\gr}(\Ext^*_\L(M,M))$ denotes the
graded centre of $\Ext^*_\L(M,M)$. The graded centre of a
graded ring $R$ is generated by
\[\{ z\in R\mid rz=(-1)^{|r||z|} zr, \text{\ $z$
  homogeneous,\ }\forall n\geq 0, \forall r\in R_n\},\] 
where $|x|$ denotes the degree of a homogeneous element $x$ in $R$.

Weakly symmetric algebras are selfinjective algebras where all
indecomposable projective modules $P$ have the property that
$P/\mathfrak{r}P\simeq \operatorname{Soc}(P)$.  All selfinjective
algebras $\L$ of finite representation type are shown to be periodic
algebras \cite{D}, meaning that
$\Omega^n_{\L\otimes_k\L^{\operatorname{op}}}(\L)\simeq \L$ for some
$n\geq 1$. It is easy to see that all periodic algebras $\L$ satisfy
\textbf{(Fg)}. Furthermore, for selfinjective algebras $\L$ with
$\mathfrak{r}^3=(0)$ and $\rrad^2\neq (0)$ we have the following result.
\begin{thm}[\cite{G,MV}]
Let $\L$ be a selfinjective algebra with $\rrad^3=(0)$ and $\rrad^2\neq
(0)$. Then $\L$ is Koszul if and only if $\L$ is of infinite
representation type.
\end{thm}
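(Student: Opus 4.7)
The plan is to use the standard quadraticity criterion for Koszulness in this setting: writing $\L = kQ/I$ with $J^3 \subseteq I \subseteq J^2$, where $J$ denotes the arrow ideal of $kQ$, the algebra $\L$ is Koszul if and only if $I$ is generated as a two-sided ideal by its homogeneous degree-two part $I\cap kQ_2$. Since $I$ automatically contains $J^3$, this amounts to proving the containment $J^3 \subseteq (I\cap kQ_2)$ inside $kQ$, a purely combinatorial question about how paths of length three decompose via the degree-two relations.

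For the direction ``Koszul $\Rightarrow$ infinite representation type'' I argue the contrapositive. Assume $\L$ is of finite representation type. By the Riedtmann--Asashiba classification of representation-finite selfinjective algebras, combined with the Loewy length three constraint $\rrad^3=0$, $\rrad^2 \neq 0$, the algebra $\L$ is up to Morita equivalence a Nakayama algebra of the form $kQ/J^3$ with $Q$ an oriented cycle (possibly a single loop). For such an algebra $I = J^3$ contains no element of pure degree two, so the two-sided ideal generated by $I \cap kQ_2$ is zero; it cannot contain $J^3$, which is nonzero since $Q$ has a path of length three. Hence $\L$ is not Koszul.

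For ``infinite representation type $\Rightarrow$ Koszul'', assume $\L$ is selfinjective with $\rrad^3=0$, $\rrad^2\neq 0$ and of infinite representation type, and I aim to show $J^3 \subseteq (I\cap kQ_2)$. The key structural input is the Nakayama duality supplied by selfinjectivity: each indecomposable projective $P_i = e_i\L$ has simple top $S_i$ and simple socle $S_{\sigma(i)}$, and the multiplication pairing on $e_j\rrad/\rrad^2 \times e_i\rrad/\rrad^2 \to e_j\rrad^2$ is controlled by the Nakayama permutation $\sigma$. The infinite representation type hypothesis excludes the pure-$J^3$ Nakayama situation, and a dimension count using this pairing shows that for every path $\alpha\beta$ of length two, either $\alpha\beta \in I$ (a genuine quadratic relation) or $\alpha\beta$ represents a socle element of $P_{s(\alpha)}$. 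Consequently, for any path $\alpha\beta\gamma$ of length three, at least one of the length-two subpaths $\alpha\beta$ or $\beta\gamma$ must already be a quadratic relation---otherwise $\alpha\beta\gamma$ would map an element of $\rrad^2$ nontrivially under multiplication by an arrow, contradicting $\rrad^3=0$ together with the nondegeneracy of the pairing. This places $\alpha\beta\gamma$ in $(I\cap kQ_2)\cdot J + J\cdot (I\cap kQ_2)$, giving the desired membership.

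The main obstacle is the reverse direction, specifically verifying that the pure-$J^3$ Nakayama case is the only obstruction to Koszulness in the selfinjective Loewy length three setting. This is essentially the combinatorial content of Green's and Martinez-Villa's analyses, which converts the selfinjective socle pairing into a guarantee that enough quadratic relations are present whenever a path of length three fails to vanish for trivial reasons.
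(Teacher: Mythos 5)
Your argument hinges on a ``quadraticity criterion'' that is false in this setting: for $\L = kQ/I$ with $J^3 \subseteq I \subseteq J^2$, having $I$ generated by $I \cap kQ_2$ (that is, $\L$ quadratic) is necessary for Koszulness but not sufficient. The preprojective algebra of the Dynkin quiver $A_3$ --- quiver $1 \rightleftarrows 2 \rightleftarrows 3$ with arrows $a_1, \overline{a}_1, a_2, \overline{a}_2$ and relations $a_1\overline{a}_1=0$, $\overline{a}_2a_2=0$, $a_2\overline{a}_2=\overline{a}_1a_1$ --- is a basic indecomposable selfinjective algebra with $\rrad^3=0$, $\rrad^2\neq 0$, of finite representation type, and it is quadratic: the degree-two relations already force $J^3\subseteq\langle I\cap kQ_2\rangle$. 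Yet it is not Koszul. One computes $\Omega^3 S_1\cong S_3$, so $\Ext^3_\L(S_1,S_3)\neq 0$; but this class cannot be a Yoneda product of degree-one classes, since every chain $\Ext^1(S_1,S_i)\cdot\Ext^1(S_i,S_j)\cdot\Ext^1(S_j,S_3)$ must travel through the quiver $1\text{--}2\text{--}3$ and there is no such walk of length three from $1$ to $3$. Hence $E(\L)$ is not generated in degree one and $\L$ is not Koszul. (Equivalently, the $(1,3)$-entry of $H_\L(-t)^{-1}$ is $-t^4/((1-t^2)(1+t^4))$, which has negative coefficients.) The same algebra also refutes the reduction you invoke in the forward direction: it is of finite representation type but is \emph{not} Morita equivalent to a Nakayama algebra $kQ/J^3$ with $Q$ a cycle, since the middle projective $P_2$ has $\rrad P_2/\rrad^2P_2\cong S_1\oplus S_3$ and so is not uniserial.

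Because both halves of your sketch reduce Koszulness to a combinatorial membership $J^3\subseteq\langle I\cap kQ_2\rangle$, this is a structural gap rather than a presentational one: the dividing line between finite and infinite type here is not quadratic versus non-quadratic but rather linear versus non-linear minimal resolutions, a strictly stronger homological condition on the Ext-algebra. That is the content of the cited results of Guo and Mart\'{\i}nez-Villa, which proceed by entirely different means (growth of the Koszul dual, bilinear-form considerations), and the present paper quotes this theorem from [G, MV] rather than proving it. Even in the infinite-type direction your argument would, at best, only establish quadraticity, and the dichotomy you assert about length-two paths is stated without proof; the real difficulty --- passing from quadraticity to a linear resolution --- is not addressed.
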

Hence in our study of weakly symmetric algebras $\L$ with
$\mathfrak{r}^3=(0)$, we can concentrate on infinite representation
type and consequently Koszul algebras. For Koszul algebras $\L$
the homomorphism of graded rings from $\HH^*(\L)$
to the Ext-algebra of the simple modules has an even nicer property
than for general algebras, as we discuss next.

For a quotient of a path algebra $\L=kQ/I$, given by a quiver $Q$ with
relations $I$ over a field $k$, it was independently observed by
Buchweitz and Green-Snashall-Solberg, that when $\L$ is a Koszul
algebra, the image of the natural map from the Hochschild cohomology
ring $\HH^*(\L)$ to the Koszul dual $E(\L)$ is equal to the graded
centre $Z_\gr(E(\L))$ of $E(\L)$. Here $E(\L)=\oplus_{i\geq
  0}\Ext^i_\L(\L_0,\L_0)$, where $\L_0$ is the degree $0$ part of
$\L$. This isomorphism was obtained by Buchweitz as a part of a more
general isomorphism between the Hochschild cohomology ring of $\L$ and
the graded Hochschild cohomology ring of the Koszul dual.  This
isomorphism has since been generalized by Keller \cite{K}.  The
statement from \cite{BGSS} reads as follows.

\begin{thm}[\cite{BGSS}]\label{thm:bgss}
Let $\L=kQ/I$ be a Koszul algebra. Then the image of the natural map
$\varphi_{\L_0}\colon \HH^*(\L)\to E(\L)$ is the graded centre
$Z_\gr(E(\L))$.
\end{thm}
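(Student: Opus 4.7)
The plan is to establish the two inclusions separately, the nontrivial one via the minimal bimodule Koszul resolution of $\L$. The inclusion $\Im \varphi_{\L_0} \subseteq Z_\gr(E(\L))$ is general and does not use the Koszul hypothesis. The map $\varphi_M$ sends a bimodule Yoneda extension $\xi$ to $\xi \otimes_\L M$, and the compatibility of this construction with Yoneda composition in $\Ext^*_\L(M,M)$, together with naturality in $M$, forces $\varphi_M(\xi)$ to graded-commute with every element of $\Ext^*_\L(M,M)$; this is the standard argument of \cite{SS,Y} and works uniformly for all $M$.

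For the reverse inclusion $Z_\gr(E(\L)) \subseteq \Im \varphi_{\L_0}$, I would use the minimal bimodule resolution $K_\bullet \to \L$ associated to the Koszul structure, whose $n$-th term has the shape
\[
K_n = \L \otimes_{\L_0} V_n \otimes_{\L_0} \L,
\]
where, with $R \subseteq \L_1 \otimes_{\L_0} \L_1$ the quadratic relations defining $I$, the $\L_0$-bimodule $V_n$ is
\[
V_n = \bigcap_{i=0}^{n-2} \L_1^{\otimes i} \otimes_{\L_0} R \otimes_{\L_0} \L_1^{\otimes (n-2-i)},
\]
and the differential is built from left and right multiplication by $\L_1$. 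Under Koszul duality $V_n$ is canonically the $k$-linear dual of $E(\L)_n$ as an $\L_0$-bimodule. Standard $\otimes$-$\Hom$ adjunction identifies
\[
\Hom_{\L\otimes_k \L^\op}(K_n, \L) \cong \Hom_{\L_0 \otimes_k \L_0^\op}(V_n, \L),
\]
and under this identification the map $\varphi_{\L_0}$ is induced by post-composition with the projection $\L \to \L_0$, which picks off the $\Hom_{\L_0 \otimes_k \L_0^\op}(V_n, \L_0) \cong E(\L)_n$ summand of the target.

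The substantive content is then to show that a class $\eta \in E(\L)_n$, viewed as an $\L_0$-bimodule map $V_n \to \L_0$, lies in $Z_\gr(E(\L))$ if and only if it lifts to a $\L\otimes_k\L^\op$-cocycle $K_n \to \L$. Since $\L$ is Koszul, $E(\L)$ is generated in degree one, so graded-centrality of $\eta$ reduces to commutation up to the Koszul sign with each $a \in E(\L)_1$. Translating this commutation relation through the Koszul correspondence yields a linear equation on $\eta$ that coincides precisely with the condition that the adjunct map $V_n \to \L$ is a cocycle for the differential $K_{n+1} \to K_n$, which is itself a signed sum of left and right multiplications by $\L_1$. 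The main obstacle is exactly this translation: matching the Koszul signs on the two sides and verifying that the lifted cocycle's image under $\varphi_{\L_0}$ really is $\eta$ modulo coboundaries. Once this centrality-$\Leftrightarrow$-cocycle dictionary is installed, the reverse inclusion follows and the theorem is proved.
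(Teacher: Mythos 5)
The paper does not prove Theorem~\ref{thm:bgss}; it is quoted from \cite{BGSS}, so there is no internal argument to compare against. Your plan is nevertheless essentially the one in \cite{BGSS}, and it is sound in outline. The forward inclusion is the general graded-centrality statement of \cite{SS,Y}, valid for any module. For the reverse, the organizing observation you should make explicit is that on the Koszul bimodule resolution $K_\bullet=\L\otimes_{\L_0}V_\bullet\otimes_{\L_0}\L$ the differential raises the internal (tensor) degree by exactly one, being built from left and right multiplication by $\L_1$. Decomposing a cochain $f\in\Hom_{\L_0\otimes_k\L_0^\op}(V_n,\L)$ by internal degree of the target, $f=\sum_j f_j$ with $f_j\colon V_n\to\L_j$, this forces $(\delta f)_{j+1}=\delta f_j$ for every $j$; in particular $(\delta g)_0=0$ for every cochain $g$, so $\varphi_{\L_0}$ is well-defined on cohomology and picks off the component $f_0\in\Hom_{\L_0\otimes_k\L_0^\op}(V_n,\L_0)\cong E(\L)_n$, and a class $\eta$ admits a cocycle lift if and only if $\delta\eta=0$, where $\delta\eta\colon V_{n+1}\to\L_1$ is computed from the two inclusions $V_{n+1}\hookrightarrow \L_1\otimes_{\L_0}V_n$ and $V_{n+1}\hookrightarrow V_n\otimes_{\L_0}\L_1$. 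The heart of the matter, which you correctly flag but do not carry out, is that $\delta\eta=0$ dualizes precisely to the condition that $\eta$ graded-commutes with $E(\L)_1$; since $E(\L)$ is generated in degree one, this is exactly $\eta\in Z_\gr(E(\L))$. Matching the two multiplications $E(\L)_1\cdot E(\L)_n$ and $E(\L)_n\cdot E(\L)_1$ against the duals of these inclusions, with the $\L_0$-bimodule structure and the Koszul signs intact, is where the real content of the theorem lives; as written your proposal is a correct blueprint rather than a proof.
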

This enables us to characterize when \textbf{(Fg)} holds for a finite
dimensional Koszul algebra $\L$ over an algebraically closed
field. Let $\L=kQ/I$ be a path algebra over an algebraically closed
field $k$. In \cite{EHSST} the following conditions are crucial with
respect to having a good theory of cohomological support varieties
over $\L$:
\begin{enumerate}
\item[\textbf{Fg1}:] there is a commutative Noetherian graded
subalgebra $H$ of $\HH^*(\L)$ with $H^0=\HH^0(\L)$,
\item[\textbf{Fg2}:] $\Ext^*_\L(\L/\rrad,\L/\rrad)$ is a finitely generated
  $H$-module, where $\rrad$ denotes the Jacobson radical of $\L$.
\end{enumerate}
In \cite[Proposition 5.7]{S} it is shown that these conditions are
satisfied if and only if the condition \textbf{(Fg)} holds for $\L$.
Suppose that \textbf{(Fg)} is satisfied for $\L$.  Since
$Z_\gr(E(\L))$ is an $\HH^*(\L)$-submodule of $E(\L)$, the Hochschild
cohomology ring $\HH^*(\L)$ is Noetherian and $E(\L)$ a finitely
generated $\HH^*(\L)$-module, we infer that $Z_\gr(E(\L))$ is a
finitely generated $\HH^*(\L)$-module as well. Hence $Z_\gr(E(\L))$ is
a Noetherian algebra and $E(\L)$ is clearly finitely generated as a
$Z_\gr(E(\L))$-module.  If $\L$ is a finite dimensional Koszul
algebra, the converse is also true. Suppose that $\L$ is a finite
dimensional Koszul algebra with $Z_\gr(E(\L))$ a Noetherian algebra
and $E(\L)$ a finitely generated module over $Z_\gr(E(\L))$. Then
$Z_\gr(E(\L))$ contains a commutative Noetherian even-degree graded
subalgebra $\widetilde{H}$, such that $Z_\gr(E(\L))$ is a finitely
generated module over $\widetilde{H}$. Let $H$ be an inverse image of
$\widetilde{H}$ in $\HH^*(\L)$. Then $H$ is (can be chosen to be) a 
commutative Noetherian graded subalgebra of $\HH^*(\L)$. Therefore the
conditions \textbf{(Fg1)} and \textbf{(Fg2)} are satisfied, and
consequently \textbf{(Fg)} holds true for $\L$ by \cite[Proposition
  5.7]{S}. Hence we have the following.

\begin{thm}\label{thm:Fgcharacterization}
Let $\L=kQ/I$ be a finite dimensional algebra over an algebraically
closed field $k$, and let $E(\L)=\Ext^*_\L(\L/\mathfrak{r},
\L/\mathfrak{r})$.
\begin{enumerate}
\item[(a)] If $\L$ satisfies \emph{\textbf{(Fg)}}, then
  $Z_{\gr}(E(\L))$ is Noetherian and $E(\L)$
  is a finitely generated $Z_{\gr}(E(\L))$-module. 
\item[(b)] When $\L$ is Koszul, then the converse implication
  also holds, that is, if $Z_{\gr}(E(\L))$ is
  Noetherian and $E(\L)$ is a finitely generated
  $Z_{\gr}(E(\L))$-module, then $\L$ satisfies
  \emph{\textbf{(Fg)}}. 
\end{enumerate}
\end{thm}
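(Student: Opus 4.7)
The plan is to treat both parts as consequences of the graded-ring map $\varphi_{\L_0}\colon \HH^*(\L)\to E(\L)$ whose image always sits inside $Z_{\gr}(E(\L))$. For part (a) I would not need the Koszul hypothesis; for part (b) I would invoke Theorem \ref{thm:bgss} to upgrade this inclusion to an equality.

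For part (a), assume \textbf{(Fg)}, so $\HH^*(\L)$ is Noetherian and $E(\L)$ is a finitely generated $\HH^*(\L)$-module. Let $Z$ denote the image of $\varphi_{\L_0}$, which is a Noetherian graded-commutative quotient of $\HH^*(\L)$ and a graded subring of $Z_{\gr}(E(\L))$. Since $E(\L)$ is finitely generated over $\HH^*(\L)$, it is finitely generated over $Z$. The submodule $Z_{\gr}(E(\L))\subseteq E(\L)$ is therefore finitely generated over $Z$ (as $Z$ is Noetherian), and a fortiori finitely generated as a $Z_{\gr}(E(\L))$-module on itself. A standard argument (a graded-commutative ring finitely generated as a module over a Noetherian graded-commutative subring is itself Noetherian) then gives that $Z_{\gr}(E(\L))$ is Noetherian, and the corresponding statement for $E(\L)$ over $Z_{\gr}(E(\L))$ is immediate.

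For part (b), assume $\L$ is Koszul and that $Z_{\gr}(E(\L))$ is Noetherian with $E(\L)$ finitely generated over it. By Theorem \ref{thm:bgss}, the image of $\varphi_{\L_0}$ equals $Z_{\gr}(E(\L))$. I would verify conditions \textbf{(Fg1)} and \textbf{(Fg2)} and then invoke \cite[Proposition 5.7]{S}. The idea is to produce a commutative Noetherian even-degree graded subalgebra $\widetilde{H}\subseteq Z_{\gr}(E(\L))$ over which $Z_{\gr}(E(\L))$ is finitely generated; this relies on the standard fact that for a graded-commutative Noetherian algebra the even-degree part is commutative Noetherian and the whole ring is a finitely generated module over it (the odd part is nilpotent modulo the even part). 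Having $\widetilde{H}$, I would lift a finite generating set to homogeneous elements of even degree in $\HH^*(\L)$ (using the surjection $\HH^*(\L)\twoheadrightarrow Z_{\gr}(E(\L))$), and take $H$ to be the subalgebra they generate together with $\HH^0(\L)$. Even degree elements of a graded-commutative ring commute strictly, so $H$ is commutative; it is also Noetherian (it surjects onto the Noetherian $\widetilde{H}$, and is a finitely generated commutative algebra), verifying \textbf{(Fg1)}. Finite generation of $E(\L)$ as a module over $Z_{\gr}(E(\L))$, combined with finite generation of $Z_{\gr}(E(\L))$ over $\widetilde{H}$, gives finite generation of $E(\L)$ as an $H$-module, which is \textbf{(Fg2)}.

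The main technical obstacle is the passage from $Z_{\gr}(E(\L))$ to the even-degree commutative subalgebra $\widetilde{H}$ and then back up along $\varphi_{\L_0}$: one must be careful that a chosen lift $H\subseteq\HH^*(\L)$ of $\widetilde{H}$ is simultaneously commutative, Noetherian, and still has the property that $E(\L)$ is finitely generated over it. The commutativity is ensured by working in even degrees (so graded-commutativity of $\HH^*(\L)$ gives strict commutativity), Noetherianness follows because $H$ is a finitely generated commutative algebra over the field, and the finite generation of $E(\L)$ over $H$ is transferred from the finite generation of $E(\L)$ over $Z_{\gr}(E(\L))$ and of $Z_{\gr}(E(\L))$ over $\widetilde{H}$ via the surjection $H\twoheadrightarrow\widetilde{H}$.
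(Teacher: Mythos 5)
Your proof is correct and follows essentially the same route as the paper: part (a) exploits that $\varphi_{\L_0}$ lands in $Z_{\gr}(E(\L))$ so that the graded centre is a Noetherian (quotient of the) $\HH^*(\L)$-module, and part (b) uses Theorem \ref{thm:bgss} to identify $Z_{\gr}(E(\L))$ with the image of $\varphi_{\L_0}$, extracts a commutative Noetherian even-degree subalgebra $\widetilde{H}$, lifts its generators to $\HH^*(\L)$, and verifies \textbf{(Fg1)}--\textbf{(Fg2)} to invoke \cite[Proposition 5.7]{S}. The only difference is cosmetic: you spell out more explicitly the Artin--Tate step in (a) and the care needed in choosing the lift $H$ in (b), both of which the paper leaves implicit.
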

In part (b) it is enough to find a commutative Noetherian graded
subring of $Z_\gr(E(\L))$ over which $E(\L)$ is a finitely generated
module. 

Our main focus in this paper is radical cube zero algebras. However,
let us illustrate the above result on radical square zero algebras.
Let $\L=kQ/J^2$ be a finite dimensional radical square zero algebra
over a field $k$. Hence $J$ is the ideal generated by the arrows in
$Q$. Since all quadratic monomial algebras are Koszul \cite{GZ}, $\L$
is a Koszul algebra and $E(\L)=kQ^\op$. If $Q$ does not have any
oriented cycles, the global dimension of $\L$ is finite and there is
no interesting theory of support varieties via the Hochschild
cohomology ring. So, assume that $Q$ has at least one oriented
cycle. Consequently $kQ^\op$ is an infinite dimensional
$k$-algebra. In addition we have that
\[Z_\gr(E(\L))=
\begin{cases}
k, & \text{if $Q$ is not
an oriented cycle ($\widetilde{\mathbb{A}}_n$),}\\
k[T], & \text{otherwise},
\end{cases}\]
see for example \cite{CB}. 

The following result is an immediate consequence of the above. 
\begin{prop}\label{prop:radical-square-zero}
Let $\L=kQ/J^2$ for some quiver $Q$ with at least one oriented cycle
and a field $k$. Then $\L$ satisfies \emph{\textbf{(Fg)}} if and only
if $\L$ is a radical square zero Nakayama algebra.
\end{prop}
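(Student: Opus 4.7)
The idea is to invoke Theorem \ref{thm:Fgcharacterization} together with the description of $Z_\gr(E(\L))$ recalled just above the statement. Since $\L=kQ/J^2$ is a quadratic monomial algebra it is Koszul by \cite{GZ}, and its Koszul dual is $E(\L)\cong kQ^\op$. Hence part~(b) of Theorem~\ref{thm:Fgcharacterization} applies, and \textbf{(Fg)} for $\L$ is equivalent to $Z_\gr(E(\L))$ being Noetherian and $E(\L)$ a finitely generated module over $Z_\gr(E(\L))$. Both of these conditions decompose along a disjoint union of quivers, and any connected component of $Q$ that is acyclic gives rise to an algebra of finite global dimension, for which both $\HH^*$ and $E(\L)$ are finite dimensional and hence \textbf{(Fg)} is trivial. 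So I may reduce to the case where $Q$ is connected and contains an oriented cycle, and then just need to match ``connected component is an oriented cycle'' with ``Nakayama''.

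For the forward direction (contrapositive), assume $Q$ is connected with at least one oriented cycle but $Q$ itself is not a single oriented cycle. Then the formula recalled from \cite{CB} gives $Z_\gr(E(\L))=k$. Since $Q$ contains an oriented cycle, $kQ^\op=E(\L)$ is infinite dimensional, so it cannot be finitely generated over $k$. By Theorem~\ref{thm:Fgcharacterization}(a), $\L$ does not satisfy \textbf{(Fg)}.

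For the converse, assume $Q$ is an oriented cycle on $n$ vertices. Then $Q^\op$ is also an oriented cycle, and $Z_\gr(E(\L))=k[T]$ where $T\in E(\L)$ is the sum over the vertices $v$ of the unique cycle of length $n$ in $Q^\op$ based at $v$; this is central because it commutes with every arrow in $Q^\op$. Every nonzero path in $kQ^\op$ has a unique length, say $qn+r$ with $0\leq r<n$, and factors as $T^q$ times a path of length $r$. Hence the finitely many paths in $kQ^\op$ of length $<n$ generate $E(\L)$ as a module over $k[T]$, and $k[T]$ is a commutative Noetherian graded subring of $Z_\gr(E(\L))$. By part~(b) of Theorem~\ref{thm:Fgcharacterization} (and the remark following it), $\L$ satisfies~\textbf{(Fg)}.

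Combining the two cases with the reduction to connected components, $\L$ satisfies \textbf{(Fg)} if and only if every component of $Q$ is either acyclic or a single oriented cycle, which is exactly to say that $\L$ is a radical square zero Nakayama algebra. The main point of care in carrying this out is writing down the centre element $T$ explicitly and checking that the finitely many short paths generate $E(\L)$; the rest of the argument is essentially a direct translation of Theorem~\ref{thm:Fgcharacterization} to this setting.
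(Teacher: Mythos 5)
Your proof uses exactly the argument the paper has in mind -- the paper itself offers no written proof beyond calling the proposition ``an immediate consequence of the above'' -- namely Koszulity of $\L$, the identification $E(\L)\cong kQ^\op$, the formula for $Z_\gr(kQ^\op)$ quoted from \cite{CB}, and Theorem \ref{thm:Fgcharacterization}. Both implications are argued correctly, and the verification that the cyclic element $T$ is central and that $E(\L)$ is finitely generated over $k[T]$ by the paths of length $<n$ is exactly the content the paper leaves to the reader.

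There is, however, a genuine error in your closing sentence. ``Every component of $Q$ is either acyclic or a single oriented cycle'' is \emph{not} equivalent to $\L$ being a Nakayama algebra: a connected acyclic quiver need not be linearly ordered of type $\mathbb{A}_m$. The Kronecker quiver $\bullet\rightrightarrows\bullet$ is acyclic, yet $k(\bullet\rightrightarrows\bullet)/J^2$ has an indecomposable projective that is not uniserial, so it is not Nakayama. Your own reduction to components then shows that the proposition, read for a possibly disconnected $Q$, would actually be false: with $Q$ the disjoint union of a $3$-cycle and a Kronecker quiver, $\L$ satisfies \textbf{(Fg)} but is not Nakayama. The intended reading is that $Q$ is connected, as the paper implicitly assumes (indecomposability is stated explicitly where needed, e.g.\ in Theorem \ref{solberg-thm:benson}, and the formula from \cite{CB} for the graded centre presupposes a connected quiver). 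Under that standing assumption the reduction to components is unnecessary, and the final matching is immediate: for a connected quiver $Q$ containing an oriented cycle, $kQ/J^2$ is Nakayama if and only if $Q$ is itself a single oriented cycle, which is exactly the dichotomy that the graded-centre computation detects.
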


Let $\L$ be a weakly symmetric algebra with $\mathfrak{r}^3=(0)$ and
$\rrad^2\neq (0)$. Denote by $\{S_1,\ldots,S_n\}$ all the
non-isomorphic simple $\L$-modules, and let $E_\L$ be the $n\times
n$-matrix given by $(\dim_k\Ext^1_\L(S_i,S_j))_{i,j}$. These algebras
are classified in \cite{B}, and among other things the following is
proved there.
\begin{thm}[\cite{B}]\label{solberg-thm:benson}
Let $\L$ be a finite dimensional indecomposable basic weakly symmetric
algebra over an algebraically closed field $k$ with
$\mathfrak{r}^3=(0)$ and $\rrad^2\neq (0)$. Then the matrix $E_\L$ is
a symmetric matrix, and the eigenvalue $\lambda$ of $E_\L$ with
largest absolute value is positive.
\begin{enumerate}
\item[(a)] If $\lambda > 2$, then the dimensions of the modules in a
  minimal projective resolution of any finitely generated
  $\L$-module has exponential growth.
\item[(b)] If $\lambda = 2$, then the dimensions of the modules in a
  minimal projective resolution of any finitely generated $\L$-module
  are either bounded or grow linearly. The matrix $E_\L$ is the
  adjacency matrix of a Euclidean diagram $\widetilde{\mathbb{A}}_n$
  for $n\geq 1$, $\widetilde{\mathbb{D}}_n$ for $n\geq 4$,
  $\widetilde{\mathbb{E}}_6$, $\widetilde{\mathbb{E}}_7$,
  $\widetilde{\mathbb{E}}_8$, or
\[\widetilde{\mathbb{Z}}_n\colon \hskip 0.7cm
\xymatrix{
0\ar@{-}@(ul,dl) \ar@{-}[r] & 1\ar@{-}[r] & \ar@{..}[r] & \ar@{-}[r] &
n-1\ar@{-}[r] & n\ar@{-}@(ur,dr)
}\]
for $n\geq 0$ or 
\[\widetilde{D\mathbb{Z}}_n\colon 
\xymatrix@R=3pt{
0\ar@{-}[dr] & & & & & & \\
 & 2 \ar@{-}[r] & 3\ar@{-}[r] & \ar@{..}[r] &   
\ar@{-}[r] & n-1\ar@{-}[r] & n\ar@{-}@(ru,rd) \\
1\ar@{-}[ur] & & & & & &
}\] 
for $n\geq 2$. 
\item[(c)] If $\lambda < 2$, then the dimensions of the modules in a
  minimal projective resolution of any finitely generated
  $\L$-module is bounded.
\end{enumerate}
\end{thm}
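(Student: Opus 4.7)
The plan is to follow Benson's approach in \cite{B}. First, I would establish the symmetry of $E_\L$. Because $\L$ is weakly symmetric, the Nakayama permutation $\nu$ is trivial, so $\Soc P_i \simeq S_i$ for every indecomposable projective $P_i$; together with $\rrad^3 = 0$ this forces the Loewy structure of $P_i$ to have $S_i$ on top, a semisimple middle layer with composition $\bigoplus_j S_j^{\dim \Ext^1_\L(S_i, S_j)}$, and $S_i$ as socle, so the Cartan matrix satisfies $C = 2I + E_\L$. The standard isomorphism $\Hom_\L(P_j, P_i) \cong D\Hom_\L(P_i, P_{\nu(j)})$ for self-injective algebras reduces, under $\nu = \id$, to $C_{ij} = C_{ji}$, and hence $E_\L$ is symmetric. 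Since $\L$ is indecomposable, its Gabriel quiver is connected, so $E_\L$ is irreducible as a non-negative integer matrix, and Perron-Frobenius supplies a real, positive, simple dominant eigenvalue $\lambda$ with strictly positive eigenvector.

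The main work is translating the spectrum of $E_\L$ into growth rates of minimal projective resolutions. Since $\rrad^2 \neq 0$, $\L$ is Koszul by Theorem 1.1, and an analysis of the short exact sequences $0 \to \Omega^{n+1} M \to P(\Omega^n M) \to \Omega^n M \to 0$, exploiting $\rrad^3 = 0$ together with $\rrad^2 P_i = \Soc P_i \simeq S_i$, yields a second-order linear recurrence of the form $d_{n+1} = E_\L \, d_n - d_{n-1}$ for the dimension vectors $d_n$ of the $n$-th terms in a minimal projective resolution of a finitely generated $\L$-module. After diagonalising $E_\L$, the behaviour of $(d_n)$ is controlled by the roots of $x^2 - \mu x + 1 = 0$ for each eigenvalue $\mu$ of $E_\L$; the modulus of these roots is at most $1$ exactly when $|\mu| \leq 2$, with equality only for $\mu = \pm 2$. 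Thus $\lambda > 2$ gives exponential growth, proving (a); $\lambda < 2$ puts every eigenvalue strictly inside $[-2, 2]$ and hence makes $(d_n)$ bounded, proving (c); and $\lambda = 2$ leaves a double root at $1$, producing at worst linear growth, which is the growth assertion in (b).

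The remaining and, to my mind, main obstacle is the combinatorial classification of connected symmetric non-negative integer matrices, possibly with non-zero diagonal entries (corresponding to loops in the Gabriel quiver, i.e.\ self-extensions of simple modules), whose spectral radius equals exactly $2$. For loopless graphs this is the classical characterisation via the Perron (additive or critical) eigenvector, yielding the Euclidean diagrams $\widetilde{\mathbb{A}}_n$, $\widetilde{\mathbb{D}}_n$ for $n \geq 4$, $\widetilde{\mathbb{E}}_6$, $\widetilde{\mathbb{E}}_7$ and $\widetilde{\mathbb{E}}_8$. Loops contribute positive diagonal entries that tend to push the spectral radius past $2$; a careful eigenvector argument shows that a loop can occur only at a vertex of degree at most $1$ in the underlying loopless graph while preserving spectral radius exactly $2$, and enumerating the shapes obtained by adding loops to an $A$-type chain or to a $D$-type fork produces exactly the two additional families $\widetilde{\mathbb{Z}}_n$ (a chain with a loop at each end, degenerating for $n=0$ to a single vertex with two loops) and $\widetilde{D\mathbb{Z}}_n$ (a chain with a loop at one end and a $D$-type fork at the other), completing the list in (b).
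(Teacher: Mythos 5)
The statement you are proving is not proved in this paper at all: it is quoted verbatim from Benson's article \cite{B}, so there is no in-paper argument to compare against. Your sketch reconstructs the expected route in \cite{B}: symmetry of $E_\L$ from $\nu=\id$ and $\rrad^3=0$, Perron--Frobenius for the dominant eigenvalue, the linear recurrence $d_{n+1}=E_\L d_n - d_{n-1}$ for syzygy dimension vectors, and the spectral classification of connected symmetric non-negative integer matrices with spectral radius $2$. That overall structure is correct and matches what one finds in Benson.

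Two points deserve a flag. First, your heuristic that ``a loop can occur only at a vertex of degree at most $1$ in the underlying loopless graph'' is not correct as stated: in $\widetilde{D\mathbb{Z}}_2$ the loop sits at the branch vertex $2$, which has degree $2$ in the loopless graph (adjacent to $0$ and $1$), and the spectral radius is still exactly $2$ (the Perron vector is $(1,1,2)$). The actual classification must allow the loop vertex and the fork vertex to coincide, which is precisely the degenerate case $n=2$ of $\widetilde{D\mathbb{Z}}_n$; and $\widetilde{\mathbb{Z}}_0$ is a single vertex with diagonal entry $2$. Second, your growth trichotomy is stated as if the spectral decomposition directly bounds the resolution, but for part (a) you need the coefficient of the Perron mode in the solution of the recurrence to be non-zero; this does follow because the Perron eigenvector is strictly positive and the initial dimension vectors are non-negative and non-zero, but the deduction should be made explicit, and for (b) one similarly needs to rule out exponential contributions from other eigenvalues (automatic here since $E_\L$ is symmetric, so $|\mu|\le\lambda=2$ for all eigenvalues $\mu$). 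With these repairs the sketch is a reasonable account of Benson's argument, but as written it is a plan rather than a proof, and the degree-$\le 1$ claim is a genuine error in the combinatorial step.
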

The trichotomy in Theorem \ref{solberg-thm:benson} corresponds to the
division in wild, tame and finite representation type as pointed out
in \cite{B}. By \cite[Theorem 2.5]{EHSST} the
complexity of any finitely generated module over an algebra satisfying
\textbf{(Fg)} is bounded above by the Krull dimension of the
Hochschild cohomology ring, hence finite. It follows from this that a
weakly symmetric algebra with radical cube zero only can satisfy
\textbf{(Fg)} in case (b) and (c) in the above theorem. We remarked
above that all the algebras in (c) satisfy \textbf{(Fg)}, so we only
need to consider the algebras given in (b). 

The above result gives the quiver of the algebra $\L$, but since $\L$
is supposed to be weakly symmetric with $\mathfrak{r}^3=(0)$, it is
easy to write down the possible relations. In these relations one can
introduce scalars from the field. Most of the times the results are
independent of these scalars, except in the $\widetilde{\mathbb{Z}}_0$
case and the $\widetilde{\mathbb{A}}_n$ case, where it suffices to
introduce one scalar $q$ in one commutativity relation. The next
sections are devoted to discussing these cases.

We end this section with a remark which is a good guide in doing
actual computations.  Let $\L$ be a finite dimensional selfinjective
Koszul algebra over an algebraically closed field $k$ satisfying
\textbf{(Fg)}. In finding non-nilpotent generators for $Z_\gr(E(\L))$
as an algebra, then \cite[Proposition 4.4]{EHSST} and Theorem
\ref{thm:bgss} says essentially that the degree of a non-nilpotent
generator for $Z_\gr(E(\L))$ is a multiplum of the
$\Omega$-periodicity of some $\Omega$-periodic module. In actual
computations this usually gives a good indication where to look for
non-nilpotent generators.

%%%%%%%%%%%%%%%%%%%%%%%%%%%%%%%%%%%%%%%%%%%%%%%%%%%%%%%%%%%%%%%%%%%%

\section{The $\tilde{\mathbb{A}}_n$-case}
In this section we characterize when a weakly symmetric algebra over a
field $k$ with radical cube zero of type $\tilde{\mathbb{A}}_n$
satisfies \textbf{(Fg)}. For a computation of the Hochschild
cohomology ring of a more general class of algebras containing the
algebras we consider in this section consult \cite{ST,ST2}.

Let $Q$ be the quiver given by
\[
\xymatrix{ 0\ar@<1ex>[r]^-{a_0} &
  1\ar@<1ex>[l]^-{\overline{a}_0}\ar@<1ex>[r]^-{a_1} &
  2\ar@<1ex>[l]^-{\overline{a}_1}\ar@<1ex>[r]^-{a_2} &
  \ar@<1ex>[l]^-{\overline{a}_2}\ar@{..}[r] & \ar@<1ex>[r]^-{a_{n-2}}&
  n-1\ar@<1ex>[l]^-{\overline{a}_{n-2}}\ar@<1ex>[r]^-{a_{n-1}} &
  n\ar@<1ex>[l]^-{\overline{a}_{n-1}}\ar@<1ex>[r]^-{a_n} &
  0\ar@<1ex>[l]^-{\overline{a}_n} }\] 
where the extreme vertices are identified as the notation
suggests. Let $k$ be a field, and let $I'$ be the ideal in $kQ$
generated by the elements $\{ a_ia_{i+1}\}_{i=0}^n$,
$\{\overline{a}_{i+1}\overline{a}_i\}_{i=0}^n$, and $\{
a_i\overline{a}_i+q_i\overline{a}_{i-1}a_{i-1}\}_{i=0}^n$, for some
nonzero elements $q_i$ in $k$. Here we compute the indices modulo
$n+1$. By changing basis the algebra $\L=kQ/I'$ can be represented by
the same quiver, but an ideal $I$ generated by the elements $\{
a_ia_{i+1}\}_{i=0}^n$, $\{\overline{a}_{i+1}\overline{a}_i\}_{i=0}^n$, and
$\{ a_i\overline{a}_i+\overline{a}_{i-1}a_{i-1}\}_{i=1}^{n}\cup
\{a_0\overline{a}_0+q\overline{a}_na_n\}$ for some nonzero element $q$
in $k$. That is, $\L\simeq kQ/I$.

Next we apply Theorem \ref{thm:Fgcharacterization} to
characterize when $\L$ satisfies \textbf{(Fg)}.

\begin{prop}
Let $Q$ and $I$ in $kQ$ be as above for a field $k$. Then $\L=kQ/I$
satisfies \emph{\textbf{(Fg)}} if and only if $q$ is a root of unity.
\end{prop}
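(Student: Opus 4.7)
The plan is to apply Theorem~\ref{thm:Fgcharacterization} to the Koszul algebra $\L$, reducing \textbf{(Fg)} to the question of whether $Z_\gr(E(\L))$ is Noetherian with $E(\L)$ finitely generated over it. First I would describe the Koszul dual explicitly: taking the orthogonal complement of the relations $I$ inside $kQ_2$ shows that $E(\L)$ is generated on the doubled cycle quiver by forward arrows $b_i\colon i\to i+1$ and backward arrows $c_i\colon i+1\to i$ (indices $\bmod\, n+1$), subject to the single ``swap'' relation
\[c_{i-1}b_{i-1}=\beta_i\,b_i c_i \quad\text{at each vertex } i,\]
where $\beta_i=1$ for $1\le i\le n$ and $\beta_0=q$. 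Thus $E(\L)$ is a $q$-twisted preprojective algebra of $\widetilde{\mathbb{A}}_n$, and iterating the swap rule shows that every word reduces uniquely to a normal-form path $B_i^a C_{i+a}^b := b_i\cdots b_{i+a-1}c_{i+a-1}\cdots c_{i+a-b}$, giving a PBW basis.

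Next I would compute $Z_\gr(E(\L))$ by imposing $zb_j=b_jz$ and $zc_j=c_jz$ for every arrow on a general homogeneous element $z=\sum_{i,a,b}\alpha_{i,a,b}B_i^aC_{i+a}^b$. A direct bookkeeping with the swap rule yields three constraints on any $(a,b)$-component that supports a nonzero summand: (i) $a\equiv b\pmod{n+1}$ (path matching); (ii) $q^{(a-b)/(n+1)}=1$ (compatibility of the $b$- and $c$-propagation rules for the $\alpha_i$'s); and (iii) $q^a=q^b=1$ (cyclic consistency after one full loop around the quiver). Conversely these conditions are sufficient, producing a one-dimensional space of central elements for each admissible $(a,b)$.

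If $q$ is not a root of unity, condition (iii) forces $a=b=0$, so $Z_\gr(E(\L))=k$. Since $E(\L)$ is infinite-dimensional, Theorem~\ref{thm:Fgcharacterization}(a) precludes \textbf{(Fg)}. If instead $q$ has order $m$, the elements
\[z=\sum_{i=0}^n(b_ic_i)^m,\qquad w_+=\sum_{i=0}^n B_i^{m(n+1)},\qquad w_-=\sum_{i=0}^n C_i^{m(n+1)},\]
of degrees $2m$, $m(n+1)$, $m(n+1)$, are non-nilpotent and central, and a further swap-rule computation shows that left multiplication by $z$, $w_+$, $w_-$ shifts the PBW indices $(a,b)$ by $(m,m)$, $(m(n+1),0)$, $(0,m(n+1))$ respectively, up to nonzero scalars. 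Since these shift vectors span a sub-semigroup of $\mathbb{Z}^2_{\ge 0}$ of finite index, only finitely many PBW basis elements (one per coset representative per vertex) are required to generate $E(\L)$ as a module over $R=k[z,w_+,w_-]\subseteq Z_\gr(E(\L))$; Theorem~\ref{thm:Fgcharacterization}(b) then delivers \textbf{(Fg)}.

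The main technical obstacle is the careful bookkeeping with the swap relation $c_{i-1}b_{i-1}=\beta_i b_ic_i$: it is needed both to set up the PBW normal form and to verify that $z$, $w_+$, $w_-$ act on that basis by the claimed index shifts with nonzero scalar coefficients. Once this is done, the remaining arithmetic---tracking $q$-powers around the cycle and comparing with the order-$m$ condition---is routine.
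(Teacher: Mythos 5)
Your overall strategy coincides with the paper's: both apply Theorem~\ref{thm:Fgcharacterization} to reduce \textbf{(Fg)} to a question about $Z_\gr(E(\L))$, both pass to a normal form for $E(\L)$ obtained by pushing backward arrows past forward arrows using the commutation relation (the paper packages this as a Gr\"obner basis and writes the normal form as $\overline{A}Ay^rx^s$ with $|\overline{A}|,|A|\leq n$, which is just a re-indexing of your PBW basis $B_i^aC_{i+a}^b$), and both handle the two directions in the same way. Your $w_\pm$ are exactly the paper's full loops $x$ and $y$ raised to the $m$-th power, and your observation that $z=\sum_i(b_ic_i)^m$ is central is a genuine addition---it has degree $2m$, considerably lower than the full-loop elements, and it is straightforwardly verified to lie in the graded centre since its degree is even. (It is, however, redundant for the finite generation step, as $w_+,w_-$ already shift $(a,b)$ by $(m(n+1),0)$ and $(0,m(n+1))$ and hence bound both PBW exponents.)

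There is one real gap: you place $w_\pm$, of degree $m(n+1)$, into the graded centre, but the graded centre condition involves the sign $(-1)^{|r||z|}$. Since $a_i$ and $\overline{a}_i$ honestly commute (not anticommute) with $x^m$ and $y^m$, these elements lie in $Z_\gr(E(\L))$ only when $m(n+1)$ is even. When $m$ and $n+1$ are both odd (e.g.\ $q$ a primitive cube root of unity and $n=2$), your $w_\pm$ commute but fail the graded-centre sign condition, so you cannot feed them into Theorem~\ref{thm:Fgcharacterization}(b) as stated. The paper sidesteps this by taking $x^{2d}$ and $y^{2d}$, which always have even degree; you should do the same (replace $w_\pm$ by $w_\pm^2$, or directly by $\sum_iB_i^{2m(n+1)}$ etc.), after which the rest of your finite-generation argument goes through unchanged. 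I would also flag that your list of constraints (i)--(iii) for a central element, and the claim that each admissible $(a,b)$ contributes a one-dimensional space, is asserted rather than derived; the paper's corresponding step (reducing a central $z$ to $z_i=\alpha_ie_iy^{r'}x^{s'}e_i$ by passing to a power, then propagating the $q$-twist around the cycle to force $q^{r'(n+1)}=1$) is the kind of bookkeeping that would need to be written out to make (i)--(iii) watertight.
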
 
\begin{proof}  
The graded centre of $E(\L)$ and $E(\L)^\op$ are the same and the
criterion for checking \textbf{(Fg)} can equivalently be performed
using $E(\L)^\op$.  The algebra $E=E(\L)^\op$ is given by
$kQ/\langle\{ a_i\overline{a}_i-\overline{a}_{i-1}a_{i-1}\}_{i=1}^{n},
qa_0\overline{a}_0-\overline{a}_na_n\rangle$. Consider the length
left-lexicographic ordering of the paths in $kQ$ by letting the
vertices be less than any arrow, and order the arrows like
$\overline{a}_0<\cdots <\overline{a}_n<a_0<\cdots<a_n$. Then $\{
a_i\overline{a}_i-\overline{a}_{i-1}a_{i-1}\}_{i=1}^{n},
qa_0\overline{a}_0-\overline{a}_na_n$ is a Gr\"obner basis for the
ideal they generate in $kQ$. In addition $E$ is bigraded via the
degrees in $a$-arrows and $\overline{a}$-arrows. The graded centre
inherits the bigrading from $E$, and the elements in $Z_\gr(E)$ are
sums of bi-homogeneous elements.

Let 
\[x=\sum_{i=0}^n a_ia_{i+1}\cdots a_na_0\cdots a_{i-1}\] 
and
\[y=\sum_{i=0}^n \overline{a}_{i-1}\overline{a}_{i-2}\cdots
\overline{a}_0\overline{a}_n\cdots \overline{a}_i\] in $E$. Any
path in $kQ$ viewed as an element in $E$ can be written uniquely
as $\overline{A}Ay^rx^s$ for some natural numbers $r$ and $s$, and
some path $\overline{A}$ in the arrows $\{\overline{a}_{i-1}\}$ and
some path $A$ in the arrows $\{a_i\}$, each of which has length at
most $n$.

We have that $a_ix=xa_i$, $\overline{a}_{i-1}y=y\overline{a}_{i-1}$,
$\overline{a}_{i-1}x=qx\overline{a}_{i-1}$ and $a_iy=q^{-1}ya_i$ for
all $i$. Hence, if $q$ is a $d$-th root of unity, then
$\{x^{2d},y^{2d}\}$ is in $Z_\gr(E)$.  It is immediate that $E$ is a
finitely generated module over $Z_\gr(E)$ when $q$ is a root of unity.

Suppose that $q$ is not a root of unity. Assume that $z$ is an element
in $Z_\gr(E)$ of homogeneous bi-degree $(r,s)$ with $r+s\geq 1$. Then
$z=z_0+z_1+\cdots+z_n$ with $z_i$ in $e_iEe_i$ of bi-degree $(r,s)$
for $i=0,1,\ldots, n$. If $z_i\neq 0$, then $z_ia_i = a_iz_{i+1}$ and
$\overline{a}_{i-1}z_i = z_{i-1}\overline{a}_{i-1}$ are non-zero,
hence both $z_{i-1}$ and $z_{i+1}$ are non-zero. It follows that each
$z_i$ is non-zero for all $i$. By considering some power of $z$ we can
assume that each $z_i=\alpha_ie_iy^{r'}x^{s'}e_i$ for some positive
integers $r'$ and $s'$ and $\alpha_i$ in $k\setminus\{0\}$. Since at
least one of $r'$ and $s'$ is positive, it follows without loss of
generality that $q^{r'(n+1)}=1$.  This is a contradiction. Hence, when
$q$ is not a root of unity, $Z_\gr(E)$ is $k$. Furthermore
\textbf{(Fg)} is not satisfied, and we have shown that $\L$ satisfies
\textbf{(Fg)} if and only if $q$ is a root of unity.
\end{proof}

%%%%%%%%%%%%%%%%%%%%%%%%%%%%%%%%%%%%%%%%%%%%%%%%%%%%%%%%%%%%%%%%%%%%

\section{The $\widetilde{\mathbb{Z}}_n$-case}
This section is devoted to proving that the weakly symmetric algebras
over a field $k$ with radical cube zero of type
$\widetilde{\mathbb{Z}}_n$ all satisfy \textbf{(Fg)} for $n>0$, again
using Theorem \ref{thm:Fgcharacterization}. The case $n=0$ is
discussed in Section \ref{section:qea}.

The quiver $Q$ of the algebras of type $\widetilde{\mathbb{Z}}_n$ are
given by
\[
\xymatrix{
0\ar@(ul,dl)_b \ar@<1ex>[r]^-{a_0} &
1\ar@<1ex>[l]^-{\overline{a}_0}\ar@<1ex>[r]^-{a_1} & 
2\ar@<1ex>[l]^-{\overline{a}_1}\ar@<1ex>[r]^-{a_2}  
& \ar@<1ex>[l]^-{\overline{a}_2}\ar@{..}[r] & \ar@<1ex>[r]^--{a_{n-2}}&
n-1\ar@<1ex>[l]^--{\overline{a}_{n-2}}\ar@<1ex>[r]^--{a_{n-1}} &
n\ar@<1ex>[l]^--{\overline{a}_{n-1}}\ar@(ur,dr)^c
}\]
where we impose the following relations when $n>0$ 
\begin{multline}
\{b^2+a_0\overline{a}_0,ba_0,\overline{a}_0b,\{a_ia_{i+1}\}_{i=0}^{n-2},
\{\overline{a}_i\overline{a}_{i-1}\}_{i=1}^{n-1},\notag\\
\{a_i\overline{a}_i+\overline{a}_{i-1}a_{i-1}\}_{i=1}^{n-1},
a_{n-1}c,c\overline{a}_{n-1},c^2+q\overline{a}_{n-1}a_{n-1}\}.\notag
\end{multline}
Let $I$ denote the ideal generated by this set of relations.

One could deform this algebra by introducing non-zero coefficients in
all the ``commutativity'' relations above. However, with a suitable
basis change, we can remove all these commutativity coefficients and
only have one remaining, for instance the $q$ as chosen above. In
addition, if $k$ contains a square root of $q$, then we can replace
$q$ by $1$.

\begin{prop}
Let $\L=kQ/I$, where $Q$ and $I$ are as above. Then $\L$ satisfies
\emph{\textbf{(Fg)}}. 
\end{prop}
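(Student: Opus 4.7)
The strategy is to verify the criterion of Theorem~\ref{thm:Fgcharacterization}(b). Since $\L$ has $\rrad^3=(0)$, $\rrad^2\neq(0)$, and is of infinite representation type, it is Koszul, so it suffices to exhibit a commutative Noetherian graded subring of $Z_\gr(E(\L))$ over which $E(\L)$ is a finitely generated module. Because $Z_\gr(E(\L)^\op)=Z_\gr(E(\L))$, I work with $E=E(\L)^\op$, whose quadratic presentation is transparent.

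First I would describe $E$ explicitly via quadratic duality. The quiver is the same $Q$, and only the orthogonal complements of the relations of $\L$ survive as quadratic relations in $E$. The monomial relations $ba_0$, $a_ia_{i+1}$, $\overline{a}_{i+1}\overline{a}_i$, $a_{n-1}c$, $c\overline{a}_{n-1}$, $\overline{a}_0 b$ of $\L$ each span the entire one-dimensional space of length-two paths in their respective positions, so they disappear in the dual; the ``commutativity'' relations produce
\[
b^2=a_0\overline{a}_0,\qquad a_i\overline{a}_i=\overline{a}_{i-1}a_{i-1}\ (1\le i\le n-1),\qquad \overline{a}_{n-1}a_{n-1}=qc^2
\]
in $E$. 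A length-lexicographic order on the arrows turns these into a finite Gr\"obner basis.

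Next I would produce a degree-$2$ central element $z\in Z_\gr(E)$. At each vertex $i$, $e_iE_2e_i$ is one-dimensional; set $z_0=b^2$, $z_i=a_i\overline{a}_i$ for $1\le i\le n-1$, $z_n=qc^2$, and $z=\sum_{i=0}^n z_i$. The scaling factor $q$ at vertex $n$ is precisely what the commutation $a_{n-1}z=za_{n-1}$ demands across the boundary relation, and all other commutations with arrows follow directly from the defining relations of $E$; since $|z|$ is even, $z$ lies in the graded centre.

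A single generator does not suffice: the cyclic word $\gamma=a_0a_1\cdots a_{n-1}c\overline{a}_{n-1}\cdots\overline{a}_0 b$ at vertex~$0$ contains none of the substrings $b^2$, $c^2$, $a_i\overline{a}_i$ or $\overline{a}_{i-1}a_{i-1}$ that become zero modulo $(z)$, so the powers $\gamma^k$ are nonzero in $E/(z)E$, which is therefore infinite-dimensional. I would construct a second central element $w$ of degree $2(n+1)$ as the sum over all vertices of the two rotations of $\gamma$ whose first arrow starts at that vertex; at vertex~$0$ this reads
\[
w_0 = a_0a_1\cdots a_{n-1}c\overline{a}_{n-1}\cdots\overline{a}_0 b + ba_0a_1\cdots a_{n-1}c\overline{a}_{n-1}\cdots\overline{a}_0,
\]
and analogously at each other vertex, with $w=\sum_i w_i$. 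Checking $w\in Z_\gr(E)$ is the main technical task: commutations with internal arrows identify the two sides as the same cyclic path traversed from adjacent starting points, while commutations with the loops $b$ and $c$ require matching both sides using the boundary relations together with the internal chain of commutativities, with a suitable $q$-scaling analogous to that for $z$.

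Finally I would prove that $E/(z,w)E$ is finite-dimensional. Modulo $(z)$ every nonzero reduced path is non-backtracking (it avoids the four substrings above), and modulo $(w)$ the number of complete round trips of such a non-backtracking path is bounded, leaving only finitely many shapes. The graded Nakayama lemma then gives that $E$ is a finitely generated $k[z,w]$-module; since $k[z,w]$ is a commutative Noetherian graded subring of $Z_\gr(E)$, Theorem~\ref{thm:Fgcharacterization}(b) yields \emph{\textbf{(Fg)}}. The main obstacle is the verification of centrality of $w$ and of the finiteness of $E/(z,w)E$, both of which reduce to explicit Gr\"obner-basis calculations in the spirit of the $\widetilde{\mathbb{A}}_n$ argument but complicated by the presence of the two distinct loops $b$ and $c$.
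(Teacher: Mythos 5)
Your proposal follows essentially the same approach as the paper. Your $z$ is, after rewriting each component via the relations $b^2=a_0\overline{a}_0$ and $qc^2=\overline{a}_{n-1}a_{n-1}$, literally the paper's element $x=\overline{a}_{n-1}a_{n-1}+\sum_{i=0}^{n-1}a_i\overline{a}_i$, and your $w$ (sum over vertices of the two cyclic rotations of $\gamma$) coincides with the paper's $y=\sum_{i=0}^n(b[i]c[i]+c[i]b[i])$, since for each vertex $i$ the two rotations of $\gamma$ starting there are exactly $b[i]c[i]$ and $c[i]b[i]$; the paper then also reduces to showing $E(\L)$ is finitely generated over the subalgebra generated by these two elements, differing only in that it gives an explicit rewriting of an arbitrary path as $x^m X^l r_X$ rather than appealing to the graded Nakayama lemma.
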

\begin{proof}
We have that 
\[E(\L)^\op=kQ/(b^2-a_0\overline{a}_0,
\{a_i\overline{a}_i-\overline{a}_{i-1}a_{i-1}\}_{i=1}^{n-1}, 
qc^2-\overline{a}_{n-1}a_{n-1}).\] 
Let $x=\overline{a}_{n-1}a_{n-1}+ \sum_{i=0}^{n-1}
a_i\overline{a}_i$. Then direct calculations show that $x$ is in
$Z_\gr(E(\L))$. For $d$ in $\{b,c\}$, denote by $d[s]$ the shortest
closed path in $Q$, starting in vertex $s$ involving $d$. Then let
$y=\sum_{i=0}^n(b[i]c[i] + c[i]b[i])$. Straightforward computations
show that $y$ is in $Z_\gr(E(\L))$.

We use the above to show that $E(\L)$ is a finitely generated module
over $Z_\gr(E(\L))$ (or actually the subalgebra generated by $x$ and
$y$). Any path in $Q$, starting and ending in the same vertex and
involving only the arrows $a_?$ and $\overline{a}_?$, is as an element
in $E(\L)$ a power of $x$ times the appropriate idempotent. Then any
path $p$ in $Q$, as an element in $E(\L)$, can be written as $x^mp'$
for some path $p'$, where $p'=X^lr_X$ for $X$ in
$\{b[i]c[i],c[i]b[i]\}_{i=0}^n$ and $r_X$ is a proper subpath of $X$
from the left. Hence, any element in $E(\L)$ is a linear combination
of elements of the form $\{x^m(b[i]c[i])^lr_{B_i},
x^m(c[i]b[i])^lr_{C_i}\}_{i=0}^n$. We directly verify that $b[i]^2$
and $c[i]^2$ are in the span of $x^me_i$ for all $i$. Then by
induction it is easy to see that $(b[i]c[i])^l$ and $(c[i]b[i])^l$ are
in $k\langle x,y\rangle(b[i]c[i],c[i]b[i],e_i)$ for all $l$. It
follows from this that $E(\L)$ is a finitely generated
$Z_\gr(E(\L))$-module, and hence $\L$ satisfies \textbf{(Fg)}.
\end{proof}

%%%%%%%%%%%%%%%%%%%%%%%%%%%%%%%%%%%%%%%%%%%%%%%%%%%%%%%%%%%%%%%%%%%%%%

\section{The $\widetilde{D\mathbb{Z}}_n$-case}

This section is devoted to showing that a symmetric finite dimensional
algebra $\L$ over a field $k$ of type $\widetilde{D\mathbb{Z}}_n$
satisfies \textbf{(Fg)}.

Let $Q$ be the quiver given by 
\[\xymatrix{%
0\ar@<-1ex>[dr]_{a_0} &  & & & & & & \\
 & 2
\ar@<-1ex>[ul]_{\overline{a}_0}\ar@<-1ex>[dl]_{\overline{a}_1}
\ar@<-1ex>[r]_{a_2} & 
               3\ar@<-1ex>[l]_{\overline{a}_2}\ar@<-1ex>[r]_{a_3} & 
               \ar@<-1ex>[l]_{\overline{a}_3} \ar@{..}[r] &   
\ar@<-1ex>[r]_{a_{n-3}} & n-2\ar@<-1ex>[l]_{\overline{a}_{n-3}}
\ar@<-1ex>[r]_{a_{n-2}} & 
 n-1\ar@<-1ex>[l]_{\overline{a}_{n-2}}\ar@<-1ex>[r]_{a_{n-1}} & 
n\ar@<-1ex>[l]_{\overline{a}_{n-1}}\ar@(ru,rd)^{b} \\
1\ar@<-1ex>[ur]_{a_1} & & & & & & & }
\]
Assume that $n>2$. Let $I$ be the ideal in $kQ$ generated by the
elements 
\begin{multline}
\{a_0\overline{a}_1, a_0a_2, a_1\overline{a}_0,
\overline{a}_2\overline{a}_1,
\overline{a}_0a_0-\overline{a}_1a_1,\overline{a}_2\overline{a}_0,
\overline{a}_1a_1-a_2\overline{a}_2,\notag\\ 
\{a_ia_{i+1}\}_{i=1}^{n-2},
\{\overline{a}_i\overline{a}_{i-1}\}_{i=2}^{n-1},
\{\overline{a}_{i-1}a_{i-1}+a_i\overline{a}_i\}_{i=3}^{n-1},\notag\\  
a_{n-1}b, \overline{a}_{n-1}a_{n-1}+qb^2, b\overline{a}_{n-1}\} 
\end{multline} 
for some $q$ in $k\setminus \{0\}$. When $n=2$, the ideal $I$ we
factor out is slightly different. This ideal is implicitly given in
the end of the proof of the next result. 

Before proving these algebras satisfies \textbf{(Fg)}, we discuss
deformations of the algebra $\L$. One could deform this algebra by
introducing non-zero coefficients in all the commutativity relations
above. However with a suitable basis change, we can remove all these
commutativity coefficients and only have one remaining, for instance
the $q$ as chosen above. We thank the referee for pointing out that
with a basis change given by $a_i\mapsto a_i$, $\overline{a}_i\mapsto
q\overline{a}_i$ and $b\mapsto b$, the scalar $q$ can be replaced by
$1$. 

\begin{prop}\label{prop:Z_n}
Let $Q$, $I$ and $\L$ be as above. Then $\L$ satisfies
\emph{\textbf{(Fg)}}.
\end{prop}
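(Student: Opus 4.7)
By Theorem \ref{thm:Fgcharacterization}(b), together with the remark following it, it suffices to produce a commutative Noetherian graded subalgebra $C$ of $Z_{\gr}(E(\L))$ such that $E(\L)$ is a finitely generated $C$-module. The hypotheses of that theorem apply because $\rrad^3=(0)$, $\rrad^2\neq(0)$ and the underlying diagram is of Euclidean type, so $\L$ is of infinite representation type and is Koszul by Theorem~1.3. The plan is to mirror the strategy of Proposition~3.1 and exhibit two explicit graded central elements in $E(\L)^\op$.

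The preliminary step is to describe $E(\L)^\op$. Exactly as in the proofs of the $\widetilde{\mathbb{A}}_n$- and $\widetilde{\mathbb{Z}}_n$-cases, one writes $E(\L)^\op=kQ/J$, where $J$ is generated by the relations orthogonal to the quadratic relations of $\L$ in $kQ_1\otimes_{kQ_0}kQ_1$. The monomial ``zero'' relations of $\L$ drop out on passing to the orthogonal, and at each vertex $v\in\{3,\dots,n\}$ the single commutativity relation of $\L$ yields a single (sign- and, at $v=n$, $q$-rescaled) relation of $E(\L)^\op$. At vertex~$2$, however, $\L$ carries the two relations $\overline a_0a_0-\overline a_1a_1$ and $\overline a_1a_1-a_2\overline a_2$; these span a two-dimensional subspace of the three-dimensional cycle space at~$2$, and their orthogonal is one-dimensional, spanned by $\overline a_0a_0+\overline a_1a_1+a_2\overline a_2$.

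The main step is to build two graded central elements $x,y\in E(\L)^\op$. Following Proposition~3.1 I would take $x$ to be a sum of a length-two loop element at each vertex, with the coefficient of the vertex-$2$ summand determined (up to an overall scalar) by the commutativity equations $\alpha x=(-1)^{|\alpha||x|}x\alpha$ for the arrows $a_0,\overline a_0,a_1,\overline a_1$ incident to vertex~$2$, whose simultaneous solvability is constrained by the three-term relation above. The second generator $y$ is built from the long closed paths of the form $a_i a_{i+1}\cdots a_{n-1}\,b\,\overline a_{n-1}\cdots\overline a_i$ at each vertex, with the two candidates originating at vertices $0$ and $1$ combined (with signs dictated by the vertex-$2$ relation) and if necessary corrected by lower-order terms at the intermediate vertices in order to produce a bona fide graded central element. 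Centrality of $x$ and $y$ is verified arrow by arrow, each check reducing to a direct rewriting of both sides of the commutativity equation modulo the quadratic relations of~$E(\L)^\op$.

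Finite generation of $E(\L)$ over $C:=k\langle x,y\rangle\subseteq Z_{\gr}(E(\L))$ is then established by a path-straightening argument that parallels the end of Proposition~3.1: every path in $Q$, read in $E(\L)^\op$, can be written as a polynomial in $x$ and $y$ times one of a finite list of proper left subpaths of the long $b$-cycles, by induction on path length. The case $n=2$ is handled in the same way using the modified ideal indicated just before the proposition; the combinatorics is easier because the linear part between vertex~$2$ and the loop~$b$ collapses. The hard part is the construction of $y$: the obvious candidate coming from either $D$-branch on its own fails to commute with the arrows at vertex~$2$ because of the three-term relation $\overline a_0a_0+\overline a_1a_1+a_2\overline a_2=0$, and the correct $y$ must symmetrise the two branches and absorb explicit correction terms so that this obstruction is cancelled.
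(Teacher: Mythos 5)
Your overall plan — pass to $E(\L)^\op$ via Theorem~\ref{thm:Fgcharacterization}(b), exhibit two graded central elements $x$ and $y$, and then straighten paths to show finite generation over $k\langle x,y\rangle$, reducing at the end to the local algebra at a suitable vertex — is exactly the strategy the paper follows. You also correctly identify the key structural point: the orthogonal at the branch vertex~$2$ is the single three-term relation $\overline a_0 a_0 + \overline a_1 a_1 + a_2\overline a_2 = 0$, and that relation is the obstruction that must be handled when building central elements.

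However, the specific construction of $x$ as a ``sum of a length-two loop element at each vertex'' fails at the outset, and this is a genuine gap. In $E(\L)^\op$ the relations at the two branch tips are precisely $a_0\overline a_0 = 0$ and $a_1\overline a_1 = 0$, so the \emph{only} closed length-two path at vertex~$0$ (resp.\ vertex~$1$) is already zero. A degree-$2$ graded central element would therefore have $x_0 = 0 = x_1$; centrality with $a_0$ and $a_1$ would force $a_0 x_2 = 0 = a_1 x_2$, and together with the three-term relation (which gives $a_0(\overline a_1 a_1 + a_2\overline a_2) = 0$, etc.) and commutation with $a_2,\overline a_2$, this forces $x_2$ to be a scalar multiple of $\overline a_0 a_0 + \overline a_1 a_1 + a_2\overline a_2 = 0$. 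So the graded centre has no nonzero element in degree~$2$, and the construction you propose cannot be salvaged by choosing coefficients. The paper works instead in degree~$4$, with $x_0 = a_0\beta\overline a_0$, $x_1 = a_1\alpha\overline a_1$, $x_i = (a_i\overline a_i)^2$ for $2\leq i\leq n-1$, and $x_n = (\overline a_{n-1}a_{n-1})^2$. This is the step your proposal is missing: the $D$-branch forces a jump in the degree of the lowest nontrivial central element, exactly unlike the $\widetilde{\mathbb{Z}}_n$-case you are mirroring, where $x = \overline a_{n-1}a_{n-1} + \sum a_i\overline a_i$ really is degree~$2$.

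The description of $y$ is also too loose to carry the argument. The paper first assembles an odd-degree element $y'$ of degree $2n-1$ (with $y'_0 = b[0]$, $y'_1 = b[1]$, and commutator-type corrections $b[r]\alpha[r]-\alpha[r]b[r]$ or $b[r]\beta[r]-\alpha[r]b[r]$ at intermediate vertices depending on parity) and then takes $y = (y')^2$ as the actual central element; the corrections are not ``lower-order'' but of full degree, and it is the square, not $y'$ itself, that lands in $Z_\gr(E(\L))$. If you want to complete the proof along these lines, you need to produce the degree-$4$ element $x$ explicitly, build $y'$ with the correct commutator corrections at every vertex, verify centrality of $(y')^2$, and then carry out the finite-generation step inside $e_2 E(\L)^\op e_2$ using the identities $\gamma\alpha = -\gamma^2-\alpha\gamma$, $b[2]\gamma = \gamma b[2]$, $b[2]^2 = \gamma^{2n-3}$ and $\alpha^2 = 0$, as the paper does.
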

\begin{proof}
Suppose $n\geq 3$. Again we apply Theorem
\ref{thm:Fgcharacterization}. The opposite $E(\L)^\op$ of the Koszul
dual of $\L$ is given by $kQ$ modulo the relations generated by
\[\{a_0\overline{a}_0, a_1\overline{a}_1,
\overline{a}_0a_0+\overline{a}_1a_1+a_2\overline{a}_2,
\{\overline{a}_{i-1}a_{i-1}-a_i\overline{a}_i\}_{i=3}^{n-1},
\overline{a}_{n-1}a_{n-1} - b^2\}.\]

Let $\alpha=\overline{a}_0a_0$, $\beta=\overline{a}_1a_1$ and
$\gamma=a_2\overline{a}_2$. Note that
$\alpha\beta+\beta\alpha=\gamma^2$ and 
$\alpha\overline{a}_0=a_0\alpha=\beta\overline{a}_1=a_1\beta=0$.  Let
$x=\sum_{i=0}^nx_i$ with 
\[x_i=\begin{cases}
(\overline{a}_{n-1}a_{n-1})^2, & \text{for $i=n$},\\
(a_i\overline{a}_i)^2, & \text{for $2\leq i\leq n-1$},\\
a_0\beta \overline{a}_0, & \text{for $i=0$},\\
a_1\alpha\overline{a}_1, & \text{for $i=1$}.
\end{cases}
\]
Then direct computations show that $x$ is in $Z_\gr(E(\L))$.

For $\eta$ in $\{\alpha,\beta,b\}$ denote by $\eta[s]$ the shortest
path in $Q$ starting in vertex $s$ involving $\eta$, whenever this
makes sense. In particular,
$\alpha[s]=\overline{a}_{s-1}\overline{a}_{s-2}\cdots\overline{a}_2\alpha
a_2a_3\cdots a_{s-1}$ for $3\leq s\leq n$, and $\alpha[2]=\alpha$. We
leave it to the reader to write out the similar formulae for the other
cases.

Let $y'=\sum_{i=0}^n y_i'$ with
\[y_i'=\begin{cases}
b[0], & i=0,\\
b[1], & i=1,\\
b[i]\alpha[i]-\alpha[i]b[i], & 2\leq i\leq n \text{\ and $i$ even,}\\
b[i]\beta[i] -\alpha[i]b[i], & 2\leq i\leq n \text{\ and $i$ odd.} 
\end{cases}
\]
We want to show that $y=(y')^2$ is in $Z_\gr(E(\L))$. In doing so the
following equalities are useful to have, 
\begin{align}
\alpha\gamma & = -\alpha\beta  = \gamma\beta\tag{$\dagger$}\\
\beta\gamma  & = -\beta\alpha  = \gamma\alpha\tag{$\ddagger$}
\end{align}
and 
\[b[i]\alpha[i]-\alpha[i]b[i] = -(b[i]\beta[i]-\beta[i]b[i])\] 
for all $2\leq i \leq n$. Note that the last property is equivalent to
having $b[i]\gamma[i]=\gamma[i]b[i]$ for $2\leq i\leq n$. The most
cumbersome calculations involve vertex $n$, and here it is useful to
note that $\alpha[n]\gamma[n]$ is equal to $\gamma[n]\alpha[n]$ if $n$
is odd, while it is equal to $\gamma[n]\beta[n]$ for $n$
even. Pointing out that $\alpha[n]^2=0$ and $\beta[n]\alpha[n]=0$ when
$n$ is even and odd, respectively, we leave it to the reader to show
that $y$ is in $Z_\gr(E(\L))$.

Next we want to show that $E(\L)$ is a finitely generated module over
the subalgebra $Z_2$ generated by $\{x_2,y_2\}$. Let $E_2=e_2E(\L)^\op
e_2$. Then $E_2$ is generated as an $k$-algebra by
$\{\alpha,\gamma,b[2]\}$. Let $\mu$ be any non-zero monomial in
$\{\alpha,\gamma,b[2]\}$. We have that $\alpha\gamma = - \gamma^2 -
\gamma\alpha$ and $b[2]\gamma = \gamma b[2]$, so we can suppose that
all the $\gamma$'s in $\mu$ can be moved to the left (since
$\gamma^2=x_2$). Furthermore, we have that $b[2]^2 = \gamma^{2n-3}$
and $\alpha^2=0$, so that we can write $\mu=\pm\gamma^t\mu_1\mu_2\cdots
\mu_r$ with $\mu_i$ in $\{\alpha,b[2]\}$ for all $i$ with $\mu_i\neq
\mu_{i+1}$. We have that $x_2=\gamma^2$ and $y_2=(b[2]\alpha)^2 +
(\alpha b[2])^2 + \alpha\gamma^{2(n-1)}$.  Let $\A$ be the set of
monomials in $\{\alpha,\gamma,b[2]\}$ with at most six factors. It
is then easy to see that the factor $E_2/Z_2\A$ is zero, and hence
$E_2$ is a finitely generated $Z_2$-module. Any oriented cycle in $Q$
not going through the vertex $2$, can be written as a power of $x$
times one of a finite set of cycles. It follows from this that
\textbf{(Fg)} is satisfied for $\L$, when $n\geq 2$.

Let $n=2$. Then $E(\L)^\op$ is given by $kQ$ modulo the relations
$\{a_0\overline{a}_0,a_1\overline{a}_1,
\overline{a}_0a_0+b^2+\overline{a}_1a_1\}$. Let
$\alpha=\overline{a}_0a_0$, $\beta=\overline{a}_1a_1$ and
$\gamma=b^2$. Let $x_0=\gamma[0]$, $x_1=\gamma[1]$, and
$x_2=\gamma^2=-(\alpha\gamma+\gamma\alpha)=-(\beta\gamma+\gamma\beta)$. Let
$y_0=b\alpha b[0]$, $y_1=b\beta b[1]$ and $y_2=(b\alpha-\alpha
b)^2=(b\beta - \beta b)^2$. Then it is easy to see that $x=x_0+x_1+x_2$
and $y=y_0+y_1+y_2$ are in $Z_\gr(E(\L))$. Using that $x_2=-\alpha b^2
- b^2\alpha$ and $\alpha y_2=\alpha b\alpha b\alpha$, it is immediate
that $\L$ satisfies \textbf{(Fg)} also in this case.
\end{proof}
The reduction to showing that $e_2Ee_2$ is a finitely generated
$e_2Ze_2$-module as in the proof above will be used later again. 

This example also provides us with additional information on Betti
numbers of periodic modules. Considering the $\L$-module $M$ with
radical layers $\left(\begin{smallmatrix}
n\\ n\end{smallmatrix}\right)$ it is easy to see that $M$ is
$\Omega$-periodic with period $2n-1$, and all the projective modules
in an initial periodic minimal projective resolution are
indecomposable except projective number $n-1$, which is a direct sum
of two indecomposable projective modules. This gives an example of an 
$\Omega$-periodic module with non-constant Betti numbers. 

%%%%%%%%%%%%%%%%%%%%%%%%%%%%%%%%%%%%%%%%%%%%%%%%%%%%%%%%%%%%%%%%%%%%%

\section{The $\widetilde{\mathbb{D}}_n$-case}
This section is devoted to proving that the weakly symmetric algebras
over a field $k$ with radical cube zero of type
$\widetilde{\mathbb{D}}_n$ all satisfy \textbf{(Fg)}.

Let $Q$ be the quiver given by 
\[\xymatrix{%
0\ar@<-1ex>[dr]_{a_0} &  & & & & & & n-1\ar@<-1ex>[ld]_{\overline{a}_{n-2}}\\
 & 2
\ar@<-1ex>[ul]_{\overline{a}_0}\ar@<-1ex>[dl]_{\overline{a}_1}
\ar@<-1ex>[r]_{a_2} & 
               3\ar@<-1ex>[l]_{\overline{a}_2}\ar@<-1ex>[r]_{a_3} & 
               \ar@<-1ex>[l]_{\overline{a}_3} \ar@{..}[r] &   
\ar@<-1ex>[r]_{a_{n-4}} & n-3\ar@<-1ex>[l]_{\overline{a}_{n-4}}
\ar@<-1ex>[r]_{a_{n-3}} & 
 n-2\ar@<-1ex>[l]_{\overline{a}_{n-3}}\ar@<-1ex>[ur]_{a_{n-2}}
\ar@<-1ex>[rd]_b &  \\
1\ar@<-1ex>[ur]_{a_1} & & & & & & & n\ar@<-1ex>[ul]_{\overline{b}} }
\]
Assume that $n> 4$. Let $I$ be the ideal in $kQ$ generated by the
elements
\begin{multline}
\{a_0\overline{a}_1, a_0a_2, a_1\overline{a}_0,
\overline{a}_2\overline{a}_1,
\overline{a}_0a_0-\overline{a}_1a_1,\overline{a}_2\overline{a}_0,
\overline{a}_1a_1-a_2\overline{a}_2,\notag\\ 
\{a_ia_{i+1}\}_{i=1}^{n-3},
\{\overline{a}_i\overline{a}_{i-1}\}_{i=2}^{n-2},
\{\overline{a}_{i-1}a_{i-1}+a_i\overline{a}_i\}_{i=3}^{n-3},\notag\\  
\overline{a}_{n-2}b, \overline{b}a_{n-2}, a_{n-3}b,
a_{n-2}\overline{a}_{n-2} - b\overline{b}, b\overline{b} -
\overline{a}_{n-3}a_{n-3}\}. 
\end{multline} 
When $n=4$ then $I$ is generated by 
\begin{multline}
\{a_0\overline{a}_1, a_0a_2,a_0b, a_1\overline{a}_0,a_1a_2,a_1b, 
\overline{a}_2\overline{a}_0, \overline{a}_2\overline{a}_1,
\overline{a}_2b,\notag\\ \overline{b}\overline{a}_0,
\overline{b}\overline{a}_1, \overline{b}a_2, 
\overline{a}_0a_0-\overline{a}_1a_1, \overline{a}_1a_1-b\overline{b}, 
b\overline{b}-a_2\overline{a}_2\}.
\end{multline}
Similarly as before, deformations via coefficients in the
commutativity relations can be removed via an appropriate basis
change. Given this we can show the following. 
\begin{prop}\label{prop:Dn-tilde}
Let $Q$, $I$ and $\Lambda$ be as above. Then  $\Lambda$ satisfies
\emph{\textbf{(Fg)}}.  
\end{prop}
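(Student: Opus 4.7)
The proof proceeds along the same lines as that of Proposition \ref{prop:Z_n}, suitably symmetrised to account for the second fork. I apply Theorem \ref{thm:Fgcharacterization}(b), so it suffices to find a commutative Noetherian graded subalgebra of $Z_\gr(E(\L))$ over which $E(\L)$ is finitely generated, and as before we may work with $E(\L)^\op$ throughout. Computing the Koszul dual, $E(\L)^\op=kQ/J$ where $J$ is generated by: the unique cycle at each of the four leaves $0,1,n-1,n$; the identifications $\overline{a}_{i-1}a_{i-1}-a_i\overline{a}_i$ at each interior vertex $3\le i\le n-3$; and at each of the two fork vertices $2$ and $n-2$ a single linear relation among the three length-$2$ cycles. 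Writing $\alpha=\overline{a}_0a_0,\ \beta=\overline{a}_1a_1,\ \gamma=a_2\overline{a}_2$ at vertex $2$ and $\alpha'=\overline{a}_{n-3}a_{n-3},\ \beta'=a_{n-2}\overline{a}_{n-2},\ \gamma'=b\overline{b}$ at vertex $n-2$, one has $\alpha+\beta+\gamma=0$ with $\alpha^2=\beta^2=0$ (giving $\alpha\beta+\beta\alpha=\gamma^2$), and $\alpha'+\beta'+\gamma'=0$ with $\beta'^2=\gamma'^2=0$ (giving the mirror identity $\beta'\gamma'+\gamma'\beta'=\alpha'^2$), together with the analogues of $(\dagger),(\ddagger)$ from the preceding proof.

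Next, I define a candidate central element $x=\sum_{i=0}^n x_i$ of bidegree $(2,2)$ by mirroring the $\widetilde{D\mathbb{Z}}_n$ formula at both ends:
\[
x_0=a_0\beta\overline{a}_0,\quad x_1=a_1\alpha\overline{a}_1,\quad x_i=(a_i\overline{a}_i)^2\ \text{for}\ 2\le i\le n-3,
\]
\[
x_{n-2}=(\alpha')^2,\quad x_{n-1}=\overline{a}_{n-2}\gamma' a_{n-2},\quad x_n=\overline{b}\beta' b.
\]
The verification that $x\in Z_\gr(E(\L))$ is an arrow-by-arrow check: on the interior it reduces to $\overline{a}_{i-1}a_{i-1}=a_i\overline{a}_i$; at the left fork to the identity $\alpha\beta+\beta\alpha=\gamma^2$ exactly as in Proposition \ref{prop:Z_n}; and at the right fork to the mirror identity $\beta'\gamma'+\gamma'\beta'=\alpha'^2$ by the symmetric calculation. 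Since the complexity of $\L$-modules here equals $2$, a second algebraically independent central element $y$ is also required; I would construct it in the same spirit as the $y$ of Proposition \ref{prop:Z_n}, namely as the square of a vertex-indexed signed sum $y'=\sum_i y_i'$ whose entries are commutators of the shortest closed paths at vertex $i$ passing through the left and through the right fork respectively. Centrality of $y$ then follows from $(\dagger),(\ddagger)$ applied at both forks.

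Finally, I show that $E(\L)$ is finitely generated as a module over $Z_0:=k\langle x,y\rangle$ by applying the reduction at the end of Proposition \ref{prop:Z_n}: every oriented cycle in $Q$ avoiding vertex $2$ can be written as a power of $x$ times a cycle in a prescribed finite set, so it suffices to show $e_2E(\L)e_2$ is a finitely generated $e_2Z_0e_2$-module. The algebra $e_2E(\L)^\op e_2$ is generated by $\alpha$, $\gamma$, and finitely many ``excursion'' paths that travel toward the right fork, bounce off one of $\alpha',\beta',\gamma'$, and return; the collapsing identity
\[
a_2a_3\cdots a_j\overline{a}_j\cdots \overline{a}_2=\gamma^{j-1},
\]
obtained by iterating the interior relations $a_k\overline{a}_k=\overline{a}_{k-1}a_{k-1}$, reduces every pure-interior excursion to a power of $\gamma$ (hence of $x$), while the cross-fork excursions reduce modulo $x$ and $y$ to one of a bounded set of normal forms via the identities at vertex $n-2$, in a way exactly analogous to the analysis at vertex $2$. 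The case $n=4$, where vertices $2$ and $n-2$ coincide as a single vertex with four arms, is handled by a minor modification of the same construction. The main technical obstacle is the bookkeeping of the long cross-fork paths, but the two-fold symmetry of $\widetilde{\mathbb{D}}_n$ reduces this to two mirror-image local analyses, each of which is essentially the same computation that appears in the $\widetilde{D\mathbb{Z}}_n$-case.
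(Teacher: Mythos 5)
Your setup is correct: the Koszul dual relations, the local notation at both fork vertices, and in particular your degree-$8$ element $x$ agree exactly with what the paper uses (your $(\alpha',\beta',\gamma')$ is the paper's $(\eta,\delta,\omega)$, and your six formulas for the $x_i$ reproduce theirs verbatim). The reduction ``it suffices to treat $e_2E(\L)e_2$'' is also the same move the paper makes. So the skeleton is right.

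The genuine gap is the second central element $y$, and this is not a cosmetic omission -- it is the hard part of the proof. You propose taking $y=(y')^2$ where $y'_i$ is a commutator of the shortest closed path at $i$ through the left fork with the one through the right fork, i.e.\ you are transplanting the ansatz from the $\widetilde{D\mathbb{Z}}_n$ case. The paper's element for $\widetilde{\mathbb{D}}_n$ is different and considerably more delicate: it is \emph{not} a square. For $2\le r\le n-2$ the paper sets
\[
y_r=\begin{cases}\alpha[r]\delta[r]+\delta[r]\alpha[r], & r,n\text{ even},\\ \alpha[r]\delta[r]+\omega[r]\beta[r], & r\text{ odd},\,n\text{ even},\\ \alpha[r]\delta[r]-\omega[r]\alpha[r], & r\text{ even},\,n\text{ odd},\\ \beta[r]\omega[r]-\omega[r]\alpha[r], & r,n\text{ odd},\end{cases}
\]
with $y_0=\delta[0]$, $y_1=\omega[1]$, $y_{n-1}=\alpha[n-1]$, $y_n=\beta[n]$, giving a homogeneous element of degree $2n-4$; your proposed square would sit in degree $4n-8$ and has entries of a different shape. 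There is no a priori reason a commutator square should be central here, and you give no verification. Checking centrality is where almost all of the paper's work is, via identities such as $\alpha[r](\delta[r]+\omega[r])=(\delta[r]+\omega[r])\alpha[r]$ or $(\delta[r]+\omega[r])\beta[r]$ depending on the parity of $n-r-1$, and the observation that the $y_r$ are fixed by the anti-automorphism $a\leftrightarrow\overline a$. Without either producing a concrete $y$ and checking these, or verifying that your squared ansatz commutes with all $a_i$ and $b$, the proof does not go through.

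Relatedly, the finite generation step is entirely deferred to an ``exactly analogous'' analysis, but the analysis at vertex $2$ depends on which $y_2$ you have. The paper's $y_2$ is $\alpha\delta[2]+\delta[2]\alpha$ (resp.\ $\delta[2]\alpha-\alpha\omega[2]$ for $n$ odd), and the key rewriting identities used -- $\delta[2]\gamma=\gamma\omega[2]$, $\delta[2]^2=0$ for $n$ even, $\delta[2]\omega[2]=0$ for $n$ odd, $\delta[2]\alpha=-\alpha\delta[2]+y_2$ -- are tailored to that choice. These are exactly what make the normal-form argument on monomials in $\{\alpha,\gamma,\delta[2]\}$ terminate, and they are not supplied by ``the identities at vertex $n-2$''. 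So as written the proposal records the correct strategy and the correct $x$, but the central content -- producing and verifying $y$, and running the monomial reduction in $E_2$ -- is missing.
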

\begin{proof}
We apply again Theorem 1.3. The opposite algebra $E =
E(\Lambda)^{\op}$ of the Koszul dual of $\Lambda$ is given by $kQ$
modulo the relations generated by
\begin{multline}
\{a_0\overline{a}_0, a_1\overline{a}_1,
\overline{a}_0a_0 + \overline{a}_1a_1 + a_2\overline{a}_2,
\{\overline{a}_{i-1}a_{i-1}-a_i\overline{a}_i\}_{i=3}^{n-3},\notag\\  
\overline{a}_{n-2}a_{n-2}, \overline{b}b,
a_{n-2}\overline{a}_{n-2} + b\overline{b} + \overline{a}_{n-3}a_{n-3}\} 
\end{multline} 
when $n>4$. For $n=4$ the relations are given by 
\[\{a_0\overline{a}_0,
a_1\overline{a}_1,\overline{a}_2a_2,\overline{b}b,  
\overline{a}_0a_0 + \overline{a}_1a_1 + a_2\overline{a}_2+b\overline{b}
\}.\]
Let $\alpha = \overline{a}_0a_0, \beta = \overline{a}_1a_1$ and
$\gamma = a_2\overline{a}_2$.  Furthermore we write $\delta =
a_{n-2}\overline{a}_{n-2}$, $\omega = b\overline{b}$ and $\eta =
\overline{a}_{n-3}a_{n-3}$. Note that $\alpha\beta + \beta\alpha =
\gamma^2$ and $\alpha\overline{a}_0= a_0\alpha=
\beta\overline{a}_1=a_1\beta =0$ and as well $\delta\overline{a}_{n-2}
= a_{n-2}\delta = \omega\overline{b} = b \omega = 0$.  

Let $x=\sum_i x_i$, where
\begin{align}
x_0 & = a_0\beta\overline{a}_0, & x_{n-2} & = \eta^2, \notag\\
x_1 & = a_1\alpha\overline{a}_1, & x_{n-1} & =
\overline{a}_{n-2}\omega a_{n-2},\notag\\ 
x_i & = (a_i\overline{a}_i)^2, \text{for\ } 2\leq i < n-2, & x_n & =
\overline{b}\delta b.\notag 
\end{align}
Then direct computations show that $x$ is in $Z_\gr(E(\Lambda))$. 
This is also true when $n=4$. Note that in this case
we have $\alpha + \beta + \delta + \omega = 0$.

When $n=4$ we find another element of degree 4 in the centre of $E$,
namely $w = \sum_{i=0}^4 w_i$, where
\begin{align}
w_0 & = a_0\delta \overline{a}_0, & 
w_2 & = (\alpha + \delta)^2 = \alpha\delta + \delta\alpha, & w_4 & =
\overline{b}\beta b.\notag\\
w_1 & = a_1\omega \overline{a}_1, & w_3 & = \overline{a}_3\alpha a_3, &
& \notag
\end{align}
 
We assume now that $n>4$. Suppose $\rho$ is one of $\alpha, \beta,
\delta, \omega$.  As before, we write $\rho[s]$ for the shortest
closed path starting at $s$ which involves $\alpha$.

Let $y = \sum_{i=0}^n y_i$, where the $y_i$ are defined as follows.
For $2\leq r\leq n-2$,
\[y_r = \begin{cases}
  \alpha[r]\delta[r] + \delta[r]\alpha[r], & \ r, n \text{\ even},\notag\\
\alpha[r]\delta[r] + \omega[r]\beta[r], & \ r \text{\ odd}, n
\text{\ even},\notag\\ 
\alpha[r]\delta[r] - \omega[r]\alpha[r], & \ r \text{\ even}, n
\text{\ odd},\notag\\ 
\beta[r]\omega[r] - \omega[r]\alpha[r], & \ r, n \text{\ odd}.\notag
\end{cases}\]
Furthermore, 
\[y_0 = \delta[0],\quad y_1 = \omega[1],\quad y_{n-1} =
\alpha[n-1],\quad y_n = \beta[n].\] 
We want to show that $y$ is in the centre of $E(\Lambda)$. One way to
prove this is to first establish the following identities.  For $2\leq
r\leq n-2$,
\[\alpha[r](\delta[r]+ \omega[r]) = 
\begin{cases}
(\delta[r]+ \omega[r])\alpha[r], & n-r-1\text{\ even},\notag\\
(\delta[r]+ \omega[r])\beta[r],  & n-r-1 \text{\ odd}.\notag
\end{cases}\leqno{(1)}\]
Moreover
\[\delta[r](\alpha[r]+\beta[r]) =  
\begin{cases}
(\alpha[r]+ \beta[r])\delta[r], & r-1 \text{\ even},\notag\\
(\alpha[r]+ \beta[r])\omega[r], & r-1 \text{\ odd}.\notag
\end{cases}\leqno{(2)}\]
Similar formulae hold by interchanging $\alpha$ and $\beta$ in $(1)$, 
and by interchanging $\delta$ and $\omega$, in $(2)$. 
Using these formulae one gets several identities for the $y_r$.
Assume $2\leq r \leq n-2$. Then 
\[y_r = \begin{cases} 
\beta[r]\omega[r] + \omega[r]\beta[r],  & n, r \text{\ even},\notag\\
\delta[r]\alpha[r] + \beta[r]\omega[r], & r\text{\ odd}, n
\text{\ even},\notag\\ 
\delta[r]\alpha[r]- \beta[r]\delta[r] = \omega[r]\beta[r] -
\alpha[r]\omega[r], & n, r \text{\ odd},\notag\\
\delta[r]\alpha[r] - \alpha[r]\omega[r] = 
\omega[r]\beta[r] - \beta[r]\delta[r], & r\text{\ even}, n\text{\ odd}.
\end{cases}\] 
These show in particular that the anti-homomorphism induced by $a\to
\overline{a}$ and $\overline{a}\to a$ of $E^+$ fixes each $y_r$. This
means that one only has to check that $y$ commutes with the arrows
$a_r$, then it automatically commutes with $\overline{a}_r$.
Furthermore, one checks that $\alpha[r]\delta[r]a_r =
a_r\beta[r+1]\omega[r+1]$, and similarly $\beta[r]\omega[r]a_r =
a_r\alpha[r+1]\delta[r+1]$, for $2\leq r < n-2$.  Using all these
details, it is not difficult to check that $y$ commutes with all
$a_i$, and with $b$.

Next, we want to show that $E(\Lambda)$ is a finitely generated module
over the subalgebra generated by $\{ x, y\}$. As in the previous
section, it suffices to show that the local algebra $E_2 =
e_2E(\Lambda)e_2$ is finitely generated as a module over the
subalgebra $Z$ generated by $\{ x_2, y_2\}$.

Recall $x_2 = \gamma^2$; and we take
\[y_2 = \begin{cases}
\alpha\delta[2] + \delta[2]\alpha, & n \text{\ even}\notag\\
\delta[2]\alpha - \alpha\omega[2], & n \text{\ odd}.
\end{cases}\]
Note that this is also correct when $n=4$, so we can deal
with arbitrary $n\geq 4$ at the same time.

The algebra $E_2$ is generated by $\{\alpha, \gamma, \delta[2]\}$,
note that $\delta[2] + \omega[2] = - \gamma^{n-3}$. We have further
identities, namely
\[\gamma\alpha = -\gamma^2 - \alpha\gamma,\quad \delta[2]\gamma =
\gamma\omega[2].\]
One  checks that if  $n$ is even, $\delta[2]^2 = 0$, and that
for  $n$ odd, $\delta[2]\omega[2]=0$.

Let $\mathcal{A}$ be the set of monomials in $\{ \alpha, \delta[2],
\gamma\}$ with at most three factors. We want to show that $E_2/Z
\mathcal{A}$ is zero, hence that $E_2$ is finitely generated over
$Z$.

Assume $\mu \in E_2$ is a non-zero monomial in $\alpha, \delta[2]$ and
$\gamma$.  We can move all even powers of $\gamma$ to the left, note
that these lie in $Z$.  Furthermore, any factor of $\alpha$ in $\mu$
can be moved to the left, using $\gamma\alpha = -\alpha\gamma + z$ for
$z\in Z$, and also using that for $n$ even, $\delta[2] \alpha =
-\alpha\delta[2] + y_2$ and for $n$ odd, $\delta[2]\alpha = y_2 +
\alpha\omega[2] = y_2 - \alpha\delta[2] - \alpha z $ where $z =
\gamma^{n-3} \in Z$.  Hence we may assume none except possibly the
first factor of $\mu$ is equal to $\alpha$. Next, consider
submonomials of length three in $\delta[2], \gamma$ of $\mu$ where
successive factors are different.  If it is of the form
$\delta[2]\gamma\delta[2] = \delta[2]\omega[2]\gamma$ then is zero if
$n$ is odd, and if $n$ is even, it is equal to $-\delta[2]^2 -
\delta[2]\gamma^{n-3} = z\delta[2]\gamma$ with $z\in Z$.  Otherwise,
it is of the form $\gamma\delta[2]\gamma = \gamma^2\omega[2] = z
\delta[2] + z'\gamma^j$ with $z, z'\in Z$ and $j=0$ or $1$.  Using
these one shows by induction on the length of $\mu$ that $\mu$ belongs
to $Z\mathcal{A}$.
\end{proof}

%%%%%%%%%%%%%%%%%%%%%%%%%%%%%%%%%%%%%%%%%%%%%%%%%%%%%%%%%%%%%%%%%%%%%

\section{The $\widetilde{\mathbb{E}}_6$-case}
This section is devoted to showing that the weakly symmetric algebras
over a field $k$ with radical cube zero of type
$\widetilde{\mathbb{E}}_6$ satisfy \textbf{(Fg)}.

Let $Q$ be the quiver 
\[\xymatrix{
& & 4 \ar@<-1ex>[d]_{\overline{a}_3} & & \\
& & 3 \ar@<-1ex>[u]_{a_3}\ar@<-1ex>[d]_{\overline{a}_2} & & \\
0\ar@<-1ex>[r]_{a_0} &
1\ar@<-1ex>[r]_{a_1}\ar@<-1ex>[l]_{\overline{a}_0}  & 2 
\ar@<-1ex>[r]_{a_4}\ar@<-1ex>[u]_{a_2}\ar@<-1ex>[l]_{\overline{a}_1}   & 5
\ar@<-1ex>[l]_{\overline{a}_4}   \ar@<-1ex>[r]_{a_5} &
6\ar@<-1ex>[l]_{\overline{a}_5}}
\]
with relations 
\begin{multline}
\{\{a_ia_{i+1}\}_{i=0}^4, \{\overline{a}_i\overline{a}_{i-1}\}_{i=1}^5, 
\{\overline{a}_{i-1}a_{i-1}+a_i\overline{a}_i\}_{i=1,3,5},
a_1a_4,\notag\\ 
\overline{a}_2a_4, \overline{a}_4a_2, \overline{a}_4a_1,
\overline{a}_1a_1-a_2\overline{a}_2, a_2\overline{a}_2 - a_4\overline{a}_4\}.
\end{multline}
Let $\L=kQ/I$, where $I$ is the ideal generated by the relations given
above for a field $k$. As before, we could deform the algebra by
introducing non-zero scalars in the commutativity relations, but by a
suitable basis change all the scalars can be removed. Then we have the
following. 
\begin{prop}\label{prop:fgE_6tilde}
Let $Q$, $I$ and $\L$ be as above. Then $\L$ satisfies
\emph{\textbf{(Fg)}}.
\end{prop}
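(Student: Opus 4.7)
The plan is to apply Theorem \ref{thm:Fgcharacterization}(b). First, I would describe the Koszul dual $E=E(\L)^{\op}$ as $kQ$ modulo the orthogonal complement of the defining quadratic relations of $\L$: the leaf relations $a_0\overline{a}_0$, $a_3\overline{a}_3$, $a_5\overline{a}_5$ kill these products in $E$, each of the $\L$-relations $\overline{a}_{i-1}a_{i-1}+a_i\overline{a}_i$ at the vertices $1,3,5$ becomes a loop identification (for instance $\overline{a}_0a_0=a_1\overline{a}_1$ at vertex $1$), and the two $\L$-relations at the trivalent vertex $2$ collapse to the single relation $\alpha+\beta+\gamma=0$, where $\alpha=\overline{a}_1a_1$, $\beta=a_2\overline{a}_2$ and $\gamma=a_4\overline{a}_4$ are the three length-two loops at vertex $2$.

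Mimicking the constructions from Sections~4 and~5, I would then exhibit two bi-homogeneous elements $x,y\in Z_{\gr}(E(\L))$, each written as a sum $\sum_{i=0}^{6}x_i$ (respectively $\sum y_i$) over the idempotents. For $x$ in degree~$4$: at each interior vertex take $x_i$ to be (a representative of) the square of the local length-two loop, and at each leaf vertex push one step inward and square a loop through the neighbouring interior vertex, mirroring $x_0=a_0\beta\overline{a}_0$ in the proof of Proposition \ref{prop:Dn-tilde}. The element $y$ of higher degree is built from length-four loops at vertex $2$ that descend along each of the three branches and return, combined with signs arranged so that $y_2$ commutes with each of $a_1$, $a_2$, $a_4$, and then propagated to the remaining vertices in the natural way. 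Verifying that $y$ indeed lies in $Z_{\gr}(E(\L))$ will be the main technical obstacle: with three branches meeting at vertex $2$ instead of two, the commutation identities among $\alpha,\beta,\gamma$ constrained by $\alpha+\beta+\gamma=0$ are more intricate than the sign identities $(\dagger)$, $(\ddagger)$ of Proposition \ref{prop:Dn-tilde}, and centrality must be checked vertex-by-vertex along each arm.

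Finally, by the reduction remarked after Proposition \ref{prop:Z_n}, it is enough to show that the local algebra $e_2Ee_2$ is finitely generated as a module over the subalgebra $Z$ generated by $\{x_2,y_2\}$. The algebra $e_2Ee_2$ is generated by $\alpha,\beta,\gamma$ subject to $\alpha+\beta+\gamma=0$ together with nilpotence relations coming from the finite branch lengths; by moving factors past one another modulo $Z$ (using the vertex-$2$ sum relation to eliminate $\gamma$) and reducing squares $\alpha^2,\beta^2$ to $Z$-multiples of shorter monomials, one expresses $e_2Ee_2=Z\cdot\A$ for a finite set $\A$ of short monomials in $\alpha,\beta$, just as in the $E_2$-argument of Proposition \ref{prop:Dn-tilde}. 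Theorem \ref{thm:Fgcharacterization}(b) then yields \textbf{(Fg)} for $\L$.
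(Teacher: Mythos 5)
Your overall architecture is the same as the paper's: produce two bi-homogeneous elements $x=\sum x_i$ and $y=\sum y_i$ of $Z_{\gr}(E(\L))$, then reduce finite generation of $E(\L)$ over $\langle x,y\rangle$ to finite generation of the local algebra $e_2Ee_2$ over $\langle x_2,y_2\rangle$, and close via Theorem~\ref{thm:Fgcharacterization}(b). However, the concrete plan has two genuine errors that would make the argument fail.

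First, the proposed degree of $x$ is wrong. In $\widetilde{\mathbb{E}}_6$ all three branches have length $2$, so the leaf vertices $0,4,6$ are at distance $2$ from the trivalent vertex $2$. The unique length-two loop at the intermediate vertex $1$ is $\delta=\overline{a}_0a_0=a_1\overline{a}_1$, and because $a_0\overline{a}_0=0$ one has $\delta^2=0$; consequently the putative $x_0=a_0\delta\overline{a}_0$ and $x_1=\delta^2$ both vanish, and there is no nonzero degree-$4$ cycle at any leaf. The paper's $x$ is necessarily of degree $6$ (with $x_2=\alpha^2\gamma+\alpha\gamma\alpha+\gamma\alpha^2$), and its $y$ is of degree $8$. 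Second, the reduction in $e_2Ee_2$ cannot proceed by ``reducing squares $\alpha^2,\beta^2$ to $Z$-multiples of shorter monomials'': the local relations are cubic, not quadratic. One has $\alpha^3=\beta^3=\gamma^3=0$ (from $a_1\alpha^2=\alpha^2\overline{a}_1=0$, etc.), but $\alpha^2\neq 0$, and since $Z$ starts in degree $6$ there is nothing shorter to absorb a square. The actual rewriting in the paper uses the degree-$6$ relation $x_2=\alpha^2\gamma+\alpha\gamma\alpha+\gamma\alpha^2$ to push occurrences of $\alpha^2$ (and of $\gamma^2$, using its $(\dagger)$-reformulation) to one side, and then the degree-$8$ relation $y_2=\gamma^2\alpha^2-\alpha\gamma\alpha\gamma$ to bound alternating words $(\alpha\gamma)^t$. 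Without replacing your degree-$4$ guess by the correct degree-$6$ element and handling the cubic (rather than quadratic) nilpotence, the finite-generation step does not close.
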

\begin{proof}
The opposite $E(\L)^\op$ of the Koszul dual of $\L$ is given by $kQ$
modulo the relations generated by 
\[\{a_0\overline{a}_0, \overline{a}_3a_3, \overline{a}_5a_5, 
\{\overline{a}_{i-1}a_{i-1} - a_i\overline{a}_i\}_{i=1,3,5},
\overline{a}_1a_1 + a_2\overline{a}_2 + a_4\overline{a}_4\}.\] Let
$\alpha=\overline{a}_1a_1$, $\beta=a_4\overline{a}_4$ and $\gamma=
a_2\overline{a}_2$.  Let $x_0=\gamma[0]$,
$x_1=(\alpha\gamma+\gamma\alpha)[1]$, and $x_2=\alpha^2\gamma +
\alpha\gamma\alpha + \gamma\alpha^2$. We want to define $x_i$ for
$i=3,4,5,6$ by symmetry. To do so, we need the following
details. Using the relations in $E(\L)$ it follows that
$a_1\alpha^2=\alpha^2\overline{a}_1=0$, and therefore
$\alpha^3=\beta^3=\gamma^3=0$. Applying this we infer that
\begin{align}
\alpha^2\gamma+\alpha\gamma\alpha+\gamma\alpha^2 & =
-(\alpha\gamma^2+\gamma\alpha\gamma+\gamma^2\alpha)\tag{$\dagger$}\\
\gamma^2\alpha & =-\gamma^2\beta\notag\\
\gamma\alpha\gamma & = - \gamma\beta\gamma\notag\\
\alpha\gamma^2 & = - \beta\gamma^2\notag
\end{align}
and similar formulae. We define now
$x_3=-(\alpha\gamma+\gamma\alpha)[3]$, $x_4=-\alpha[4]$,
$x_5=-(\alpha\beta+\beta\alpha)[5]$ and $x_6=-\alpha[6]$. Utilizing
symmetry direct computations then show that $x=\sum_{i=0}^6 x_i$ is in
$Z_\gr(E(\L))$. 

Computing $(\dagger)\cdot \gamma - \gamma\cdot (\dagger)$ we obtain
\[\gamma^2\alpha^2 - \alpha\gamma\alpha\gamma  =
 \alpha^2\gamma^2 - \gamma\alpha\gamma\alpha.\]
Furthermore, since $a_0\alpha^2=0=\alpha^2\overline{a}_0$, we have
that 
\[\gamma^2\alpha[1]+\alpha\gamma^2[1]=-\gamma\alpha\gamma[1] -
\alpha\gamma\alpha[1].\]
Using that
$a_0\alpha\gamma\alpha[1]=a_0\alpha\gamma^2[1]=0
=\alpha\gamma\alpha[1]\overline{a}_0=\gamma^2\alpha[1]\overline{a}_0$
and $y_2=\beta^2\gamma^2 - \gamma\beta\gamma\beta$, and letting 
\begin{align}
y_2 & = \gamma^2\alpha^2 - \alpha\gamma\alpha\gamma,\notag\\
y_1 & = - \gamma\alpha\gamma[1],\notag\\
y_0 & = \gamma^2[0],\notag
\end{align}
we obtain by symmetry an element $y=\sum_{i=0}^6 y_i$ in
$Z_\gr(E(\L))$. 

Next we show that $E(\L)$ is a finitely generated module over the
subalgebra generated by $\{x,y\}$. The algebra $e_2E(\L)e_2$ is
generated by $\{\alpha,\gamma\}$ as an algebra.  Given a monomial
$\mu$ in $\alpha$ and $\gamma$ we can use
$-\alpha\gamma^2=x_2+\gamma\alpha\gamma+\gamma^2\alpha$ to move the
occurrence of $\gamma^2$ to the left in $\mu$. Hence, except for a
short initial part, we can assume that $\mu$ is a word in $\{\alpha,
\alpha^2,\gamma\}$. If $\alpha\gamma\alpha^2$ occurs somewhere in
$\mu$, the equality $\gamma\alpha^2 = - x_2 - \alpha^2\gamma -
\alpha\gamma\alpha$ gives $\alpha\gamma\alpha^2= -\alpha x_2
-\alpha^2\gamma\alpha$ and the occurrence of $\alpha^2$ is moved
further to the left in creating one new monomial and one expression
$-x_2\mu'$, where $\mu'$ is a monomial of degree three less than
$\mu$. Hence, except for a short initial part, we can assume that
$\mu$ is a word in $\{\alpha,\gamma\}$. The equality $y_2 +
\gamma^2\alpha^2 = \alpha\gamma\alpha\gamma $ implies that $\gamma
y_2=\gamma\alpha\gamma\alpha\gamma$ and $y_2
\alpha=\alpha\gamma\alpha\gamma\alpha$. By induction we obtain that
any monomial in $\alpha$ and $\gamma$ can be written as a linear
combination of products of powers of $\{x_2,y_2\}$, and a finite set
of monomials in $\alpha$ and $\gamma$. Hence $e_2E(\L)e_2$ is a
finitely generated module over the algebra generated by
$\{x_2,y_2\}$. As before it follows from this that \textbf{(Fg)} holds
for $\L$.
\end{proof}

%%%%%%%%%%%%%%%%%%%%%%%%%%%%%%%%%%%%%%%%%%%%%%%%%%%%%%%%%%%%%%%%%%

\section{The $\widetilde{\mathbb{E}}_7$-case}
This section is devoted to proving that the weakly symmetric algebras
over a field $k$ with radical cube zero of type
$\widetilde{\mathbb{E}}_7$ satisfy \textbf{(Fg)}.

Let $Q$ be the quiver 
\[\xymatrix{ 
& & & 4 \ar@<-1ex>[d]_{\overline{a}_3} & & & \\
0\ar@<-1ex>[r]_{a_0} &
1\ar@<-1ex>[r]_{a_1}\ar@<-1ex>[l]_{\overline{a}_0} 
& 2\ar@<-1ex>[r]_{a_2}\ar@<-1ex>[l]_{\overline{a}_1} &
3\ar@<-1ex>[r]_{a_4}\ar@<-1ex>[u]_{a_3}\ar@<-1ex>[l]_{\overline{a}_2}  
& 5\ar@<-1ex>[r]_{a_5}\ar@<-1ex>[l]_{\overline{a}_4}
 & 6\ar@<-1ex>[r]_{a_6}\ar@<-1ex>[l]_{\overline{a}_5} &
7\ar@<-1ex>[l]_{\overline{a}_6}} 
\]
with relations 
\begin{multline}
\{\{a_ia_{i+1}\}_{i=0}^5, \{\overline{a}_i\overline{a}_{i-1}\}_{i=1}^6, 
\{\overline{a}_{i-1}a_{i-1}+a_i\overline{a}_i\}_{i=1,2,5,6},
a_2a_4,\notag\\ 
\overline{a}_3a_4, \overline{a}_4a_3, \overline{a}_4\overline{a}_2,
\overline{a}_2a_2-a_3\overline{a}_3, a_3\overline{a}_3 - a_4\overline{a}_4\}.
\end{multline}
Let $\L=kQ/I$, where $I$ is the ideal generated by the relations given
above for a field $k$. As for the $\widetilde{\mathbb{E}}_6$-case,
deforming the algebra by introducing non-zero scalars in the
commutativity relations does not change the algebra up to
isomorphism. Then we have the following. 
\begin{prop}\label{prop:fgE_7tilde}
Let $Q$, $I$ and $\L$ be as above. Then $\L$ satisfies
\emph{\textbf{(Fg)}}.
\end{prop}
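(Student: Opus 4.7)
The plan is to apply Theorem \ref{thm:Fgcharacterization}(b) in exactly the same fashion as in the $\widetilde{\mathbb{E}}_6$-case. First I would identify $E=E(\L)^\op$ explicitly as $kQ$ modulo the monomial relations $\{a_0\overline{a}_0,\overline{a}_3a_3,\overline{a}_6a_6\}$ at the three leaves of $\widetilde{\mathbb{E}}_7$, the swap relations $\{\overline{a}_{i-1}a_{i-1}-a_i\overline{a}_i\}_{i=1,2,5,6}$ at the bivalent internal vertices, and the sum relation $\overline{a}_2a_2+a_3\overline{a}_3+a_4\overline{a}_4$ at the trivalent vertex~$3$. All the real work then takes place at vertex~$3$. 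Writing $\alpha=\overline{a}_2a_2$, $\beta=a_3\overline{a}_3$, $\gamma=a_4\overline{a}_4$, one has $\alpha+\beta+\gamma=0$ in $e_3Ee_3$, and $\beta^2=0$ is immediate from the leaf relation $\overline{a}_3a_3=0$. Iterating the swap relations along the two length-three legs (exactly as in the proof that $\alpha^3=0$ in Proposition \ref{prop:fgE_6tilde}) produces the additional identities $\alpha^4=\gamma^4=0$, and squaring $\beta=-\alpha-\gamma$ then yields the fundamental commutation relation $\alpha^2+\alpha\gamma+\gamma\alpha+\gamma^2=0$.

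The next step is to exhibit two homogeneous elements $x=\sum_ix_i$ and $y=\sum_iy_i$ of $Z_\gr(E(\L))$. For a prospective vertex-$3$ component $x_3\in e_3Ee_3$ expressed as a polynomial in $\alpha$ and $\gamma$, the centrality conditions $x_3 a_3=a_3 x_4$ and $\overline{a}_3 x_3=x_4\overline{a}_3$ force $x_3$ to factor as $\beta X=X\beta$ for some $X\in e_3Ee_3$ (so that $x_4=\overline{a}_3Xa_3$ is well defined), while commutation with the arrows $a_2,\overline{a}_2, a_4,\overline{a}_4$ dictates the components along the two longer legs via the shortest-path convention $\eta[s]$ used in Propositions \ref{prop:Z_n} and \ref{prop:Dn-tilde}. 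Once suitable $x_3$ and $y_3$ of appropriate even degrees are guessed, centrality is checked by verifying commutation with each arrow $a_i$; the anti-involution $a_i\leftrightarrow\overline{a}_i$ used in the proof of Proposition \ref{prop:Dn-tilde} gives commutation with the $\overline{a}_i$ for free.

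With $x$ and $y$ in hand I would invoke the reduction highlighted after the proof of Proposition \ref{prop:Z_n}: it is enough to show that $E_3=e_3Ee_3$ is a finitely generated module over the subalgebra $Z\subseteq E_3$ generated by $\{x_3,y_3\}$, since any oriented cycle in $Q$ not meeting vertex~$3$ is, as an element of $E(\L)$, a scalar multiple of a power of $x$ times one of finitely many short paths. Because $E_3$ is generated as a $k$-algebra by $\alpha$ and $\gamma$, and the relation $\alpha^2+\alpha\gamma+\gamma\alpha+\gamma^2=0$ allows each occurrence of $\gamma\alpha$ to be rewritten as $-\alpha^2-\alpha\gamma-\gamma^2$, every monomial in $E_3$ reduces to the normal form $\alpha^i\gamma^j$ modulo $Z$ and strictly shorter terms. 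The identities $\alpha^4=\gamma^4=0$ bound small exponents while $x_3$ and $y_3$ absorb long tails, producing a finite module basis for $E_3$ over $Z$.

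The main obstacle will be the correct choice of $y_3$ and the verification of its centrality. The branch lengths $(3,3,1)$ of $\widetilde{\mathbb{E}}_7$ break the $S_3$-symmetry enjoyed by $\widetilde{\mathbb{E}}_6$, where the coincidence $\alpha^3=\beta^3=\gamma^3=0$ produced the clean cubic $\alpha^2\gamma+\alpha\gamma\alpha+\gamma\alpha^2$ in the centre. Here only a $\mathbb{Z}/2$-symmetry between $\alpha$ and $\gamma$ is available, and the nilpotency $\beta^2=0$ must be invoked precisely at the step where $y_3$ is transported through the short leg to vertex~$4$; identifying $y_3$ so that all these compatibilities hold simultaneously, and then carrying out the associated bookkeeping on each leg, is the technical heart of the argument.
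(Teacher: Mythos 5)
Your overall plan is the right one and matches the paper's strategy: apply Theorem~\ref{thm:Fgcharacterization}(b), identify $E(\L)^\op$ as $kQ$ modulo the leaf relations, swap relations and the sum relation at the trivalent vertex, reduce the problem to showing $E_3 = e_3E(\L)e_3$ is module-finite over a subalgebra generated by the degree-$3$ components of two central elements, and note that the anti-involution $a_i\leftrightarrow\overline{a}_i$ halves the work in checking centrality. You also correctly identify the local presentation at vertex~$3$ (up to relabelling you swap $\beta$ and $\gamma$ with respect to the paper: the paper sets $\gamma = a_3\overline{a}_3$, $\beta = a_4\overline{a}_4$, with $\gamma^2=0$ and $\alpha^4=\beta^4=0$).

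However there is a genuine and large gap: you never actually produce the central elements $x$ and $y$, and you say explicitly that identifying $y_3$ and verifying its centrality ``is the technical heart of the argument.'' That is precisely the part that a proof must contain. In the paper one has to exhibit an element $x$ of degree $8$ with $x_3 = \alpha^3\gamma + \alpha^2\gamma\alpha + \alpha\gamma\alpha^2 + \gamma\alpha^3$ (verified via the identity $(\dagger)$ relating the symmetric cubics in $\alpha$ and in $\beta$), and an element $y$ of degree $12$ with $y_3 = \alpha^2\gamma\alpha^2\gamma + \gamma\alpha^2\gamma\alpha^2$, whose centrality needs several nontrivial manipulations (including rewriting $y_3$ in terms of $\beta$ and $\gamma$ to transport across the other long branch). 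None of this appears in your proposal; ``guess suitable $x_3, y_3$'' is not an argument.

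A second, more subtle, problem is your finite-generation mechanism. You claim the relation $\alpha^2+\alpha\gamma+\gamma\alpha+\gamma^2=0$ lets one rewrite $\gamma\alpha$ and thereby bring every monomial to the normal form $\alpha^i\gamma^j$ ``modulo $Z$ and strictly shorter terms.'' That is not what happens: the rewrite $\gamma\alpha\mapsto -\alpha^2-\alpha\gamma-\gamma^2$ preserves degree and reintroduces $\gamma$ on the left, so the reduction does not terminate by itself, and $E_3$ is not spanned by the finitely many $\alpha^i\gamma^j$ even together with $\alpha^4=\gamma^4=0$ (it has GK dimension $1$, hence is infinite-dimensional). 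The actual finite-generation argument has to use the specific forms of $x_3$ and $y_3$ to absorb, one at a time, submonomials such as $\gamma\alpha^2\gamma\alpha^2\gamma$, $\gamma\alpha\gamma\alpha^2\gamma$, and $\gamma(\alpha\gamma)^t$ (in the paper's variables $\alpha$ and $\gamma=a_3\overline{a}_3$, with $\gamma^2=0$). Until you write down concrete $x_3$, $y_3$, verify their centrality, and then replicate that kind of careful bookkeeping, the proposal is an outline, not a proof.
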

\begin{proof}
The opposite $E(\L)^\op$ of the Koszul dual of $\L$ is given by $kQ$
modulo the relations generated by 
\[\{a_0\overline{a}_0, \overline{a}_3a_3, \overline{a}_6a_6, 
\{\overline{a}_{i-1}a_{i-1} - a_i\overline{a}_i\}_{i=1,2,5,6},
\overline{a}_2a_2 + a_3\overline{a}_3 + a_4\overline{a}_4\}.\]
Let $\alpha=\overline{a}_2a_2$, $\beta=a_4\overline{a}_4$ and
$\gamma=a_3\overline{a}_3$. One easily shows that
$\alpha^4=\beta^4=\gamma^2=0$. Furthermore using that
$\alpha+\beta+\gamma = 0$,  
$\beta\gamma = -\alpha\gamma$ and $\gamma\beta=-\gamma\alpha$, we
get 
\begin{align}
\beta^3\gamma + \beta^2\gamma\beta + \beta\gamma\beta^2 + \gamma\beta^3 
 & = -[(\beta\gamma)^2 + \gamma\beta^2\gamma +
  (\gamma\beta)^2]\tag{$\dagger$}\\ 
& = - (\beta\gamma + \gamma\beta)^2 = - (\alpha\gamma +
\gamma\alpha)^2\notag\\  
& = -[(\alpha\gamma)^2 + \gamma\alpha^2\gamma + (\gamma\alpha)^2]\notag\\
& = \alpha^3\gamma + \alpha^2\gamma\alpha +
\alpha\gamma\alpha^2+\gamma\alpha^3.\notag
\end{align}

Let $x=\sum_{i=0}^7 x_i$ be the element of degree $8$ defined as follows
\begin{align}
x_0 & = \gamma[0], &  x_7 & = \gamma[7],\notag\\
x_1 & = (\alpha\gamma + \gamma\alpha)[1], &
x_6 & = (\beta\gamma + \gamma\beta)[6],\notag\\
x_2 & = (\alpha^2\gamma + \alpha\gamma\alpha + \gamma\alpha^2)[2], & 
x_5 & = (\beta^2\gamma  + \beta\gamma\beta+ \gamma\beta^2)[5],\notag\\
x_3 & = \alpha^3\gamma + \alpha^2\gamma\alpha + \alpha\gamma\alpha^2 +
\gamma \alpha^3, & x_4 & = -(\alpha\gamma\alpha)[4].\notag
\end{align}
Using $(\dagger)$, symmetry in $\alpha$ and $\beta$ and preforming
straightforward computations, we infer that $x$ is in $Z_\gr(E(\L))$.

Define $y = \sum_{i=0}^7 y_i$ as the following degree $12$ element in
$E(\L)$ with  
\begin{align}
y_0 & = -\gamma\alpha\gamma[0], & y_7 & = -\gamma\beta\gamma[7],\notag\\
y_1 & = \gamma\alpha^2\gamma[1], & y_6 & = \gamma\beta^2\gamma[6],\notag\\
y_2 & = (-\alpha^2\gamma\alpha^2 - \alpha^2\gamma\alpha\gamma +
\gamma\alpha^2\gamma\alpha)[2], & y_5 & = (-\beta^2\gamma\beta^2
-\beta^2\gamma\beta\gamma + \gamma\beta^2\gamma\beta)[5],\notag\\
y_3 & = \alpha^2\gamma\alpha^2\gamma + \gamma\alpha^2\gamma\alpha^2,
& y_4 & = \alpha^2\gamma\alpha^2[4].\notag
\end{align}
Using the last equality in $(\dagger)$ and $a_0\overline{a}_0=0$, it
follows that that $y_0$ and $y_1$ ``commute'' with $a_0$ and
$\overline{a}_0$.  Premultiplying the last equality in $(\dagger)$ with
$\alpha^2$ gives
\[\alpha^3\gamma\alpha^2 + \alpha^2\gamma\alpha^3 
= -[\alpha^2(\alpha\gamma)^2 + \alpha^2(\gamma\alpha)^2 +
  \alpha^2\gamma\alpha^2\gamma].\]
In computing $a_2y_3 - y_2a_2$ we obtain 
\[a_2y_3 - y_2a_2 = a_2[\alpha^2\gamma\alpha^2\gamma +
  \alpha^2\gamma\alpha\gamma\alpha + \alpha^2\gamma\alpha^3].\]
Furthermore, substitute for $\alpha^2\gamma\alpha^3$ using the above
expression, we can then cancel four terms and are left with
\[a_2y_3 - y_2a_2 = a_2[-\alpha^3\gamma\alpha^2 -
  \alpha^2(\alpha\gamma)^2]= 0,\] 
since $a_2\alpha^3=0$. Similar arguments give that $y_2$ and $y_3$
commute with $\overline{a}_2$ and $y_1$ and $y_2$ commute with $a_1$
and $\overline{a}_1$. One easily checks that $y_3$ and $y_4$ commute
with $a_3$ and $\overline{a}_3$. Utilizing that
$\gamma\beta(\gamma\beta + \beta\gamma)\beta\gamma = 0$, we conclude
by a direct substitution that $y_3=\beta^2\gamma\beta^2\gamma +
\gamma\beta^2\gamma\beta^2$. By symmetry in $\alpha$ and $\beta$ the
elements $\{y_3,y_5,y_6,y_7\}$ satisfy the required equations, so that
$y$ is an element in $Z_\gr(E(\L))$.

Now we show that $E(\L)$ is a finitely generated module over the
algebra generated by $\{x,y\}$. As before, we show that
$E_3=e_3E(\L)e_3$ is a finitely generated module over the subalgebra
generated by $\{x_3,y_3\}$. Let $\mu$ be any monomial in $\alpha$ and
$\gamma$. Recall that $x_3=\alpha^3\gamma + \alpha^2\gamma\alpha +
\alpha\gamma\alpha^2 + \gamma\alpha^3$. Using this equation we can
move the occurrence of $\alpha^3$ to the left, so that it remains to
analyze monomials $\mu$, where $\alpha^3$ does not occur except for in a
short initial part. Recall that $y_3 = \alpha^2\gamma\alpha^2\gamma +
\gamma\alpha^2\gamma\alpha^2$.  This gives that
$\gamma\alpha^2\gamma\alpha^2\gamma = \gamma y_3$. Hence we only need
to deal with submonomials of the form
$\gamma\alpha\gamma\alpha\gamma$, $\gamma\alpha\gamma\alpha^2\gamma$
and $\gamma\alpha^2\gamma\alpha\gamma$. Since $\gamma x_3\gamma =0$,
we obtain that $\gamma\alpha\gamma\alpha^2\gamma =
-\gamma\alpha^2\gamma\alpha\gamma$. In this way we can move
occurrences of $\alpha^2$ to the left, and if we create a submonomial
of the form $\gamma\alpha^2\gamma\alpha^2\gamma$, we replace it by
$\gamma y_3$ as above. Then it remains to analyze a monomial $\mu$,
which except for a short initial and a short terminal part, is of the
form $\gamma(\alpha\gamma)^t$ for some positive integer $t$. Recall
that $x_3=-\alpha\gamma\alpha\gamma
-\gamma\alpha\gamma\alpha-\gamma\alpha^2\gamma$, so that $\gamma x_3 =
-\gamma\alpha\gamma\alpha\gamma$. Combining all the observations
above, we have shown that any monomial in $\alpha$ and $\gamma$ can be
written as a linear combination of powers of $\{x_3,y_3\}$ times some
finite set of monomials in $\{\alpha,\gamma\}$. This shows that
$E(\L)$ is a finitely generated module over the subalgebra generated
by $\{x_3,y_3\}$, and therefore $\L$ satisfies \textbf{(Fg)}.
\end{proof}

%%%%%%%%%%%%%%%%%%%%%%%%%%%%%%%%%%%%%%%%%%%%%%%%%%%%%%%%%%%%%%%%%%%

\section{The $\widetilde{\mathbb{E}}_8$-case}
This section is devoted to showing that the weakly symmetric algebras
over a field $k$ with radical cube zero of type
$\widetilde{\mathbb{E}}_8$ satisfy \textbf{(Fg)}.

Let $Q$ be the quiver 
\[\xymatrix{
& & 3\ar@<-1ex>[d]_{\overline{a}_2} & & & & & \\
0\ar@<-1ex>[r]_{a_0} & 1\ar@<-1ex>[r]_{a_1}\ar@<-1ex>[l]_{\overline{a}_0} &
2\ar@<-1ex>[r]_{a_3}\ar@<-1ex>[u]_{a_2}\ar@<-1ex>[l]_{\overline{a}_1}   &
4\ar@<-1ex>[r]_{a_4}\ar@<-1ex>[l]_{\overline{a}_3} &
5\ar@<-1ex>[r]_{a_5}\ar@<-1ex>[l]_{\overline{a}_4}   & 
6\ar@<-1ex>[r]_{a_6}\ar@<-1ex>[l]_{\overline{a}_5} &
7\ar@<-1ex>[r]_{a_7}\ar@<-1ex>[l]_{\overline{a}_6}  &
8\ar@<-1ex>[l]_{\overline{a}_7} 
}\]
with relations 
\begin{multline}
\{\{a_ia_{i+1}\}_{i=0}^6, \{\overline{a}_i\overline{a}_{i-1}\}_{i=1}^7, 
\{\overline{a}_{i-1}a_{i-1}+a_i\overline{a}_i\}_{i=1,4,5,6,7},
a_1a_3,\notag\\ 
\overline{a}_2a_3, \overline{a}_3a_2, \overline{a}_3\overline{a}_1,
\overline{a}_1a_1-a_2\overline{a}_2, a_2\overline{a}_2 - a_3\overline{a}_3\}.
\end{multline}
Let $\L=kQ/I$, where $I$ is the ideal generated by the relations given
above for a field $k$. Deforming the algebra by introducing non-zero
scalars in the commutativity relations does not change the algebra up
to isomorphism. Given this, we have the following. 
\begin{prop}\label{prop:fgE_8tilde}
Let $Q$, $I$ and $\L$ be as above. Then $\L$ satisfies
\emph{\textbf{(Fg)}}. 
\end{prop}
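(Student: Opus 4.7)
The plan is to apply Theorem~\ref{thm:Fgcharacterization} in the same style as the $\widetilde{\mathbb{E}}_6$ and $\widetilde{\mathbb{E}}_7$ cases: exhibit two central elements $x,y\in Z_\gr(E(\Lambda))$ and show that $E(\Lambda)$ is finitely generated as a module over the subalgebra they generate.

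First I would write $E(\Lambda)^{\op}$ explicitly as $kQ$ modulo the leaf relations $a_0\overline{a}_0$, $\overline{a}_2 a_2$, $\overline{a}_7 a_7$, the commutation relations $\overline{a}_{i-1}a_{i-1} - a_i\overline{a}_i$ for $i\in\{1,4,5,6,7\}$, and the trivalent relation $\overline{a}_1 a_1 + a_2\overline{a}_2 + a_3\overline{a}_3 = 0$ at vertex $2$. Setting $\alpha = \overline{a}_1 a_1$, $\gamma = a_2\overline{a}_2$ and $\beta = a_3\overline{a}_3$ at vertex $2$, we have $\alpha + \beta + \gamma = 0$; sliding arrows using the commutation relations and the leaf conditions gives $\gamma^2 = 0$ (short branch), $\alpha^3 = 0$ (arm of length $2$), and $\beta^6 = 0$ with $\beta^5 \neq 0$ (arm of length $5$).

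For the first central element I expect $x$ to sit in degree $12$, with local component at the trivalent vertex equal to
\[x_2 \;=\; \sum_{j=0}^{5} \beta^{5-j}\gamma\beta^{j},\]
the natural analogue of the degree-$8$ element $\alpha^3\gamma + \alpha^2\gamma\alpha + \alpha\gamma\alpha^2 + \gamma\alpha^3$ used in the $\widetilde{\mathbb{E}}_7$ proof. The components $x_i$ at vertices $i \neq 2$ are built from the shortest closed paths $\alpha[i]$, $\beta[i]$, $\gamma[i]$ exactly as in the $\widetilde{\mathbb{E}}_7$ case (with appropriate signs), and centrality of $x$ reduces to a vertex-local check that uses the leaf relations. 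For the second element I expect $y$ of degree $20$, with vertex-$2$ component of the ``double-staircase'' form
\[y_2 \;=\; \beta^{4}\gamma\beta^{4}\gamma + \gamma\beta^{4}\gamma\beta^{4},\]
in direct analogy with $y_3 = \alpha^2\gamma\alpha^2\gamma + \gamma\alpha^2\gamma\alpha^2$ for $\widetilde{\mathbb{E}}_7$. Verifying centrality of $y$ requires analogues of the identity $(\dagger)$ from that proof; the key inputs are $(\alpha+\gamma)^6 = \beta^6 = 0$ together with the partial symmetry relations between $\alpha$- and $\beta$-products modulo $\gamma$.

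As in the earlier cases it then suffices to show that $E_2 = e_2 E(\Lambda) e_2$, generated as a $k$-algebra by $\alpha$ and $\gamma$ (with $\beta = -\alpha - \gamma$), is finitely generated as a module over the subalgebra $Z$ generated by $\{x_2, y_2\}$. The defining identity for $x_2$ lets one migrate every factor of $\beta^5$ (equivalently, any alternating $\alpha$-$\gamma$ block of ``$\beta$-length'' $\geq 5$) leftward modulo $Z$, and $y_2$ handles the remaining longer iterated products; combined with $\gamma^2 = \alpha^3 = 0$ this leaves only finitely many reduced monomials. The passage from finite generation of $E_2$ over $Z$ to finite generation of $E(\Lambda)$ over $Z_\gr(E(\Lambda))$ is identical to the corresponding step in the $\widetilde{D\mathbb{Z}}_n$ proof. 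The main obstacle relative to $\widetilde{\mathbb{E}}_7$ is the loss of the $\alpha\leftrightarrow\beta$ symmetry: the arm lengths $(2,1,5)$ are genuinely asymmetric, so $\alpha$ and $\beta$ have different nilpotency orders, each central element must be constructed explicitly on the $\beta$-side, and several identities that were automatic by symmetry in the $\widetilde{\mathbb{E}}_7$ case now require case-by-case verification.
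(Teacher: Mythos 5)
Your overall plan is exactly the paper's: apply Theorem~\ref{thm:Fgcharacterization}, write $E(\Lambda)^{\op}$ explicitly, work at the branch vertex $2$ with $\alpha,\beta,\gamma$ satisfying $\alpha+\beta+\gamma=0$, $\alpha^3=0$, $\gamma^2=0$, $\beta^6=0$, exhibit two central elements $x,y$ of degrees $12$ and $20$, and reduce finite generation to the local algebra $E_2=e_2E(\Lambda)e_2$ over the subalgebra generated by $\{x_2,y_2\}$. Your $x_2=\sum_{j=0}^5\beta^{5-j}\gamma\beta^j$ is right: the paper's $x_2$ is $\zeta=\rho^2$ where $\rho=\gamma\alpha^2+\alpha\gamma\alpha+\alpha^2\gamma$, and one of the first things the paper proves is that $\rho^2$ equals precisely this $\beta$-staircase.

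The gap is in the degree-$20$ element. You propose $y_2=\beta^4\gamma\beta^4\gamma+\gamma\beta^4\gamma\beta^4$, the naive analogue of $y_3=\alpha^2\gamma\alpha^2\gamma+\gamma\alpha^2\gamma\alpha^2$ from the $\widetilde{\mathbb{E}}_7$ case, but you give no verification of centrality, and you should be suspicious of this choice. In $\widetilde{\mathbb{E}}_7$ the key identity $(\dagger)$ and the $\alpha\leftrightarrow\beta$ symmetry ($\alpha^4=\beta^4=0$) are what make the $\alpha$-staircase \emph{equal} to the $\beta$-staircase; that is precisely the structure you note is lost here. The paper instead takes $y_2=\omega^2$, where $\omega=\alpha^2\gamma\alpha\gamma+\alpha\gamma\alpha\gamma\alpha+\gamma\alpha\gamma\alpha^2$. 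That element is not a $\beta$-staircase at all; it is a homogeneous degree-$(3,2)$ expression in $(\alpha,\gamma)$ chosen so that $\omega$ commutes with $\alpha$ and satisfies $\omega\gamma+\gamma\omega=-\zeta$, which is what makes $\omega^2$ central in $E_2$. The hard part of the paper's proof is precisely establishing, via the identities labeled $(\dagger)$, $(\ddagger)$, $(\bigtriangleup)$, $(\pentagon)$, $(\Box)$, the two technical formulas (A) and (B) that recast $\omega^2$ in $\beta$-form so the long-arm local components $y_4,\dots,y_8$ cohere with $y_2$. None of this is automatic; your sketch ("requires analogues of $(\dagger)$") names the obstacle but does not resolve it. It is not clear that your $\beta^4\gamma\beta^4\gamma+\gamma\beta^4\gamma\beta^4$ is even central in $E_2$ (it obviously commutes with $\gamma$, but commuting with $\alpha$ or $\beta$ is the real issue and is false for the naive $\alpha$-analogue $\alpha\gamma\alpha\gamma+\gamma\alpha\gamma\alpha$ of this type).

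The finite generation step is also substantially under-specified. The paper needs two nontrivial lemmas: one that migrates factors of $\omega$ (not central, only $\omega^2$ is) to the left at the cost of terms of strictly shorter length, using $\gamma\omega=-(\omega\gamma+\zeta)$; and a normal-form lemma showing that every monomial in $\alpha,\gamma$ reduces, modulo $k\langle\zeta,\omega\rangle$, to one of the bounded forms $\alpha^i(\gamma\alpha)^r(\gamma\alpha^2)^s\gamma\alpha^j$ with $r,s\le 2$ and $i,j\le 2$. Your one-sentence version ("migrate $\beta^5$ leftward... $y_2$ handles the remaining longer iterated products") does not explain why the rewriting terminates or what the finite spanning set is; in the paper, getting a bound on the exponents $r$ and $s$ is the crux and it genuinely depends on the specific forms of $\zeta,\omega$. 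In short: the strategy is right, the first central element is right, but the actual construction and verification of $y$, together with the termination argument for the rewriting, constitute the proof, and those are missing or, in the case of $y_2$, likely incorrect as stated.
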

\begin{proof}
The opposite $E(\L)^\op$ of the Koszul dual of $\L$ is given by $kQ$
modulo the relations generated by 
\[\{a_0\overline{a}_0, \overline{a}_2a_2, \overline{a}_7a_7, 
\{\overline{a}_{i-1}a_{i-1} - a_i\overline{a}_i\}_{i=1,4,5,6,7},
\overline{a}_1a_1 + a_2\overline{a}_2 + a_3\overline{a}_3\}.\]
Let $\alpha=\overline{a}_1a_1$, $\beta=a_3\overline{a}_3$ and
$\gamma=a_2\overline{a}_2$. As before let $E_2=e_2E(\L)^\op e_2$ be the
local algebra at vertex $2$. 

The structure of the proof is as before, first we exhibit two elements
$x$ and $y$ in $Z_\gr(E(\L))$. Then we show that $E(\L)$ is a finitely
generated module over the subalgebra generated by $\{x,y\}$, through
analyzing the the local algebra $E_2$.  The algebra $E_2$ is generated
by $\{\alpha, \gamma\}$, and they satisfy $\alpha^3=0$ and
$\gamma^2=0$.  Furthermore we have $\beta^6=0$.  This is equivalent
with
\begin{multline}
0= (\gamma\alpha^2\gamma\alpha^2 + \alpha^2\gamma\alpha^2\gamma)
+ (\alpha\gamma\alpha\gamma\alpha^2 +
\alpha^2\gamma\alpha\gamma\alpha)\tag{$\dagger$}\\ 
+ (\gamma\alpha^2\gamma\alpha\gamma + \gamma\alpha\gamma\alpha^2\gamma)
+ \alpha\gamma\alpha^2\gamma\alpha + (\gamma\alpha)^3 +
(\alpha\gamma)^3.\notag
\end{multline}
This will be used often. Furthermore, we have, from expanding $-\beta^3
= (\alpha+\gamma)^3$ that
\begin{equation}
-\beta^3 + \gamma\beta\gamma = -\beta^3 - \gamma\alpha\gamma  =
(\gamma\alpha^2 + \alpha\gamma\alpha +
\alpha^2\gamma).\tag{$\ddagger$}
\end{equation} 
Denote this element by $\rho$. Note that $\rho$ commutes with $\alpha$. 

In giving elements in $Z_\gr(E(\L))$, the following identity is
helpful. Let $\zeta = \rho^2$, then $\zeta$ has degree 12, and it lies
in the centre of $E_2$. To this end, first note that it can be written
in different ways.
\begin{align}
\zeta & = \gamma\alpha^2\gamma\alpha^2 +
  \alpha^2\gamma\alpha^2\gamma +  
\alpha^2\gamma\alpha\gamma\alpha 
+ \alpha\gamma\alpha^2\gamma\alpha +
\alpha\gamma\alpha\gamma\alpha^2\tag{$\bigtriangleup$}\\ 
& = -(\gamma\alpha^2\gamma\alpha\gamma +
\gamma\alpha\gamma\alpha^2\gamma + (\gamma\alpha)^3 +
(\alpha\gamma)^3) \cr 
& = (\gamma\alpha^2 + \alpha\gamma\alpha + \alpha^2\gamma)^2.\notag
\end{align}
By the second identity in $(\bigtriangleup)$ we have $\zeta \gamma = -
(\gamma\alpha)^3\gamma = \gamma\zeta$; and since $\rho$ commutes with
$\alpha$, so does $\zeta$. Hence $\zeta$ is in the centre of $E_2$.

\subsection*{An element of degree $12$ in $Z_\gr(E(\L))$} 
Define $x = \sum_{i=0}^8 x_i$, where  
\begin{align}
x_0 & = \gamma\alpha^2\gamma[0], & x_3 & = -(\alpha\gamma)^2\alpha[3], &  
x_6 & = \textstyle{\sum_{i=0}^2\beta^{2-i}\gamma\beta^i[6]},\notag\\
x_1 & = (\alpha\gamma\alpha^2\gamma + \gamma\alpha^2\gamma +
(\alpha\gamma)^2\alpha)[1], & x_4 & =  \textstyle{\sum_{i=0}^4
\beta^{4-i}\gamma\beta^i[4]}, & x_7 & = (\beta\gamma +
\gamma\beta)[7],\notag\\ 
x_2 & = \zeta, & x_5 & = \textstyle{\sum_{i=0}^3\beta^{3-i}\gamma\beta^i[5]},
& x_8 & = \gamma[8].\notag
\end{align}
We claim that $x$ is in the centre of $E(\L)$. On the branch of the
quiver starting with vertex $0$ one uses the first expression for
$\zeta$ given in $(\bigtriangleup)$. On the branch with vertex $3$ one
uses the second expression for $\zeta$. For the long branch, we need
$\zeta$ in terms of $\beta$ and $\gamma$.  Namely, we have
\[\zeta = (-\beta^3 + \gamma\beta\gamma)^2 = 
(\beta^5\gamma + \beta^4\gamma\beta + \beta^3\gamma\beta^2 +\cdots +
\gamma\beta^5).\]
To see this, write the RHS   as
\[\beta^3[\beta^2\gamma + \beta\gamma\beta + \gamma\beta^2] 
+ [\beta^2\gamma + \beta\gamma\beta + \gamma\beta^2]\beta^3.\] 
Expanding $\alpha^3=0$ gives $\beta^2\gamma + \beta\gamma\beta +
\gamma\beta^2 = -\beta^3 - \gamma\beta\gamma$, substitute this and use
$\beta^6=0$.

\subsection*{An element of degree $20$ in $Z_\gr(E(\L))$} 
Next we find an element $y$ of degree $20$ in $Z_\gr(E(\L))$. Define 
\[\omega = \alpha^2\gamma\alpha\gamma + \alpha\gamma\alpha\gamma\alpha
+ \gamma\alpha\gamma\alpha^2.\] 
This element commutes with $\alpha$. Furthermore, $\omega^2$ commutes
with $\gamma$, using the following which is easy to check.
\[\gamma\omega + \omega\gamma = -\zeta.\leqno{(\Box)}\]
Define $y=\sum_{i=0}^8 y_i$, where 
\begin{align}
y_0 & = \gamma\alpha\gamma\alpha^2\gamma\alpha\gamma[0], & y_5 & =
\textstyle{\sum_{i=0}^3 \beta^{3-i}\alpha^2\gamma\alpha^2\beta^i[5]}, \notag\\
y_1 & = (\alpha(\gamma\alpha\gamma\alpha^2\gamma\alpha\gamma) + 
(\gamma\alpha\gamma\alpha^2\gamma\alpha\gamma)\alpha +
(\alpha\gamma)^4\alpha)[1], & y_6 & =
\textstyle{\sum_{i=0}^2\beta^{2-i}\alpha^2\gamma\alpha^2\beta^i[6]}, \notag\\ 
y_2 & = \omega^2, & y_7 & = \textstyle{\sum_{i=0}^1
\beta^{1-i}\alpha^2\gamma\alpha^2\beta^i[7]}, \notag\\
y_3 & = (\alpha^2\gamma\alpha\gamma\alpha^2\gamma\alpha 
+ \alpha\gamma\alpha^2\gamma\alpha^2\gamma\alpha 
+ \alpha\gamma\alpha^2\gamma\alpha\gamma\alpha^2)[3], & y_8 & =
\alpha^2\gamma\alpha^2[8].\notag\\
y_4 & = \textstyle{\sum_{i=0}^4 \beta^{4-i}\alpha^2\gamma\alpha^2\beta^i +
2(\beta^4\gamma\beta^4)[4]}, & & \notag 
\end{align}
We claim that $y$ is an element in the centre of $E(\L)$. It is
straightforward to check that on each branch, away from the branch
vertex, we have $a_iy=ya_i$ and $\overline{a}_iy=y\overline{a}_i$, and
similarly $a_1y=ya_1$ and $\overline{a}_1y=y\overline{a}_1$.  The
remaining identity at the long branch will follow directly if we show
\begin{equation}
\omega^2 = \sum_{i=1}^5 \beta^{5-i} \alpha^2\gamma\alpha^2\beta^i +
2[\beta^5\gamma\beta^4 + \beta^4\gamma\beta^5].\tag{A}\label{eq:A}
\end{equation}
Furthermore, the remaining identity at vertex $3$ will follow directly from 
\begin{multline}
\omega^2 = \gamma[
\alpha^2\gamma\alpha\gamma\alpha^2\gamma\alpha 
+ \alpha\gamma\alpha^2\gamma\alpha^2\gamma\alpha 
+ \alpha\gamma\alpha^2\gamma\alpha\gamma\alpha^2]\notag\\
+ [(\alpha^2\gamma\alpha\gamma\alpha^2\gamma\alpha 
+ \alpha\gamma\alpha^2\gamma\alpha^2\gamma\alpha 
+ \alpha\gamma\alpha^2\gamma\alpha\gamma\alpha^2]\gamma.\tag{B}\label{eq:B}
\end{multline}
We use the following identity
\[(\alpha\gamma)^3\alpha^2\gamma\alpha 
+\alpha\gamma\alpha^2(\gamma\alpha)^3 + 
\alpha\gamma\alpha\gamma\alpha^2\gamma\alpha\gamma\alpha = 0,
\leqno{(\pentagon)}\]
which is obtained from $(\dagger)$ by premultiplying with
$\alpha\gamma$ and postmultiplying with $\gamma\alpha$.

We start with proving \eqref{eq:A}. First we calculate
\[\eqref{eq:A}\textrm{-I} = \sum_{i=1}^5 \beta^{5-i}
\alpha^2\gamma\alpha^2\beta^i.\]  
This can be written as $\beta^3m + m\beta^3$, where 
\begin{align} 
m & =\alpha\gamma\alpha^2\gamma\alpha^2 + \gamma\alpha^2\gamma\alpha^2\gamma +
  \alpha^2\gamma\alpha^2\gamma\alpha\notag\\
& = \alpha\zeta + \gamma\alpha^2\gamma\alpha^2\gamma -
  \alpha^2\gamma\alpha\gamma\alpha^2.\notag
\end{align}
Recall from $(\ddagger)$ that $\beta^3 = - \rho - \gamma\alpha\gamma$,
where $\rho^2=\zeta$. Substituting this gives
\begin{align} 
\beta^3m + m\beta^3 & = (-\rho m - \gamma\alpha\gamma m -
m\gamma\alpha\gamma - m \rho)\notag\\ 
& = -\rho m - (\gamma\alpha)^2\zeta + (\gamma\alpha\gamma\alpha^2)^2 
- (\alpha\gamma)^2\zeta + (\alpha^2\gamma\alpha\gamma)^2 -
m\rho.\notag  
\end{align}
Note that the two squares occur in $\omega^2$.  Next
\begin{align}
-\rho m & = -\rho \alpha\zeta - \rho\gamma\alpha^2\gamma\alpha^2\gamma
+ \rho \alpha^2\gamma\alpha \gamma\alpha^2 \notag\\
& = (\alpha\gamma)^4\alpha^2 + \alpha^2(\gamma\alpha)^4 
-  \rho\gamma\alpha^2\gamma\alpha^2\gamma
+ \rho \alpha^2\gamma\alpha \gamma\alpha^2\notag
\end{align}
(and reversing each term gives an identity for $-m\rho$). Note that
the first two terms occur in $\omega^2$. In the expression for
$\beta^2 m + m\beta^3 -\omega^2$ we can cancel two of the terms
immediately. Namely, we obtain that 
\[\rho\alpha^2\gamma\alpha\gamma\alpha^2 +
\alpha^2\gamma\alpha\gamma\alpha^2\rho = 0,\]
by first substituting $\rho\alpha^2 = \alpha^2\gamma\alpha^2 =
\alpha^2\rho$ and then pre- and post-multiply $(\dagger)$ with
$\alpha^2$. 

\subsubsection*{\eqref{eq:A}\emph{-II}}
We get from this that $\beta^3m + m\beta^3  - \omega^2$ is equal to
\[
(\alpha\gamma\alpha\gamma\alpha)^2 + 
(\alpha\gamma)^4\alpha^2 + \alpha^2(\gamma\alpha)^4 
- (\gamma\alpha)^2\zeta - (\alpha\gamma)^2\zeta
- \rho(\gamma\alpha^2\gamma\alpha^2\gamma)
 - (\gamma\alpha^2\gamma\alpha^2\gamma)\rho.\]

\subsubsection*{\eqref{eq:A}\emph{-III}}  By definition and an obvious
substitution 
\begin{align}  
\beta^5\gamma\beta^4 + \beta^4\gamma\beta^5 & = \beta^4[\beta\gamma +
  \gamma\beta]\beta^4 & = & \beta^4(\alpha^2 - \beta^2)\beta^4 & = &
\beta^4\alpha^2\beta^4\notag\\ & = \beta^3\gamma\alpha^2\gamma\beta^3
& = & (\rho + \gamma\alpha\gamma)
\gamma\alpha^2\gamma(\gamma\alpha\gamma + \rho) & = &
\rho(\gamma\alpha^2\gamma)\rho.\notag
\end{align}
So we must show that 
\[\eqref{eq:A}\textrm{-II} = - 2\rho\gamma\alpha^2\gamma\rho.
\leqno{(*)}\]  
Using that $\gamma\alpha^2\gamma = \gamma(\rho -
\alpha\gamma\alpha)=(\rho-\alpha\gamma\alpha)\gamma$ and
$\rho^2=\zeta$, we infer that  
$\rho(\gamma\alpha^2\gamma\alpha\gamma\alpha) - \zeta(\gamma\alpha)^2
= -\rho(\alpha\gamma)^3\alpha$ and
$\alpha\gamma\alpha\gamma\alpha^2\gamma\rho - (\alpha\gamma)^2\zeta =
-(\alpha\gamma)^3\alpha\rho$. 
Using the same identity as above we get that 
\[\alpha^2(\alpha\gamma)^4 -
\rho(\alpha\gamma)^3\alpha = - \alpha\gamma\alpha^2(\gamma\alpha)^3.\] 
The identity obtained from this by reversing the order in each monomial 
holds similarly. We add the appropriate equations and cancel, and we get 
\[\alpha^2(\gamma\alpha)^4 - \zeta(\gamma\alpha)^2 =
-\alpha\gamma\alpha^2(\gamma\alpha)^3 -
\rho(\gamma\alpha^2\gamma\alpha\gamma\alpha).\] 
The identity obtained by reversing the order in each term also
holds. We substitute these into \eqref{eq:A}\textrm{-II} and obtain
that it is equal to
\begin{multline}
-(\alpha\gamma\alpha\gamma\alpha)^2
-\alpha\gamma\alpha^2(\gamma\alpha)^3 -
(\alpha\gamma)^3\alpha^2\gamma\alpha  
\notag\\
- \rho(\gamma\alpha^2\gamma\alpha\gamma\alpha)
-(\alpha\gamma\alpha\gamma\alpha^2\gamma)\rho 
- \rho(\gamma\alpha^2\gamma\alpha^2\gamma)
-(\gamma\alpha^2\gamma\alpha^2\gamma)\rho. 
\end{multline}
The sum of the first three terms is zero, by $(\pentagon)$. 
Now we note that
\[\rho\gamma\alpha^2\gamma \rho =
\rho\gamma\alpha^2\gamma\alpha^2\gamma +
\rho\gamma\alpha^2\gamma\alpha\gamma\alpha, 
\leqno{(*)}\]
so we can replace two terms in \eqref{eq:A}\textrm{-II} by
$-\rho\gamma\alpha^2\gamma\rho$. The identity obtained from $(*)$ by
reversing the order also holds, and we can therefore replace the other
two terms in \eqref{eq:A}\textrm{-II}, which involve $\rho$ by
$-\rho\gamma\alpha^2\gamma\rho$.  So in total we get that 
\eqref{eq:A}\textrm{-II} is equal to $-2\rho\gamma\alpha^2\gamma\rho$
as required. This proves (A).

We prove now (B), that is
\begin{multline} 
\gamma\alpha\gamma\alpha^2\gamma\alpha^2\gamma\alpha 
+ \gamma\alpha\gamma\alpha^2\gamma\alpha\gamma\alpha^2
+ \alpha\gamma\alpha^2\gamma\alpha^2\gamma\alpha\gamma \notag\\
+ \alpha\gamma\alpha^2\gamma\alpha\gamma\alpha^2\gamma
- \alpha\gamma\alpha\gamma\alpha^2\gamma\alpha\gamma\alpha -
(\alpha\gamma)^4\alpha^2 - \alpha^2(\gamma\alpha)^4=0.\tag{$\Diamond$} 
\end{multline}
Take relation $(\dagger)$, and pre- and post-multiply it with $\alpha\gamma$,
this gives  
\[\alpha\gamma\alpha^2\gamma\alpha^2\gamma\alpha\gamma 
+ \alpha\gamma\alpha^2\gamma\alpha\gamma\alpha^2\gamma = 
- \alpha\gamma\alpha\gamma\alpha^2\gamma\alpha^2\gamma -
(\alpha\gamma)^5, 
\leqno{\eqref{eq:B}\textrm{-I}}\]
and we can replace terms 3 and 4 in $(\Diamond)$ by
\eqref{eq:B}\textrm{-I}. Symmetrically we can replace terms 1 and 2 in
$(\Diamond)$ by
\[-\gamma\alpha^2\gamma\alpha^2\gamma\alpha\gamma\alpha -
(\gamma\alpha)^5.\leqno{\eqref{eq:B}\textrm{-I$^*$}}\]
Next we claim that \eqref{eq:B}\textrm{-I} is equal to
\[(\alpha\gamma)^3\alpha^2\gamma\alpha + (\alpha\gamma)^4\alpha^2 +
\alpha\gamma\alpha\gamma\alpha^2\gamma\alpha\gamma\alpha. 
\leqno{\eqref{eq:B}\textrm{-II}}\]
Namely 
\begin{align}
\alpha\gamma\alpha\gamma\alpha^2\gamma\alpha^2\gamma & =
\alpha\gamma\alpha\gamma\alpha^2(\rho - \alpha\gamma\alpha)\notag\\
& =(\alpha\gamma)^2(\rho - \alpha\gamma\alpha)\rho -
\alpha\gamma\alpha\gamma\alpha^2\gamma\alpha\gamma\alpha 
\notag\\ 
& =\alpha\gamma)^2\zeta - (\alpha\gamma)^3\alpha\rho -
\alpha\gamma\alpha\gamma\alpha^2\gamma\alpha\gamma\alpha 
\notag\\ 
& = -(\alpha\gamma)^5 - (\alpha\gamma)^3\alpha^2\gamma\alpha - 
(\alpha\gamma)^3\alpha\gamma\alpha^2
-\alpha\gamma\alpha\gamma\alpha^2\gamma\alpha\gamma\alpha.\notag
\end{align}
Symmetrically we can replace \eqref{eq:B}\textrm{-I$^*$} by
\[\alpha\gamma\alpha^2(\gamma\alpha)^3 + \alpha^2(\gamma\alpha)^4 +
\alpha\gamma\alpha\gamma\alpha^2\gamma\alpha\gamma\alpha
\leqno{\eqref{eq:B}\textrm{-II$^*$}}.\]   
We substitute \eqref{eq:B}\textrm{-II} and
\eqref{eq:B}\textrm{-II$^*$} into $(\Diamond)$ and cancel, this
leaves us to show that 
\[(\alpha\gamma)^3\alpha^2\gamma\alpha 
+\alpha\gamma\alpha^2(\gamma\alpha)^3 + 
\alpha\gamma\alpha\gamma\alpha^2\gamma\alpha\gamma\alpha = 0.\]
We get this directly by premultiplying the identity $(\dagger)$ with
$\alpha\gamma$ and postmultiplying with $\gamma\alpha$.  This proves
the identity (B), and hence that $y$ is in $Z_\gr(E(\L))$.

\subsection*{Finite generation}
We want to show that $E_2$ is finitely generated over the subalgebra
generated by the central elements $\zeta$ and $\omega^2$. Let $\phi$
be a monomial in $\{\alpha, \gamma\}$. We define its length $l(\phi)$,
to be the total number of factors $\{\alpha, \gamma\}$.  As a first
goal towards finite generation, we want to express $\phi$ as a
polynomial in $\zeta, \omega^2$, where the coefficients are
polynomials in $\{\alpha, \gamma\}$, and so that we have only finitely
many coefficients. To this end, we have already seen that 
\begin{enumerate}
\item[(i)] $\omega$ and $\rho$ commute with $\alpha$,
\item[(ii)] $\omega\gamma + \gamma\omega = - \zeta$,
\item[(iii)] $\omega\rho + \rho\omega = -3\zeta\alpha^2$,
\item[(iv)] $\gamma\rho = \rho\gamma + (\gamma\alpha)^2 - (\alpha\gamma)^2$.
\end{enumerate}
Furthermore, we have that $(\gamma\alpha)^{3k}\gamma =
(-1)^k\zeta^k\gamma$ for $k\geq 1$, since $\zeta \gamma =
-(\gamma\alpha)^3\gamma$. And, $(\alpha^2\gamma)^{2k+1} =
\rho^{2k}\alpha^2\gamma = \zeta^k\alpha^2\gamma$ for $k\geq 1$, since
$\alpha^2\gamma = \omega - \alpha\gamma\alpha - \gamma\alpha^2$.
These observations are a main step towards our first goal.  At the next
step we get expressions, which have $\omega$ as a factor. Although this
is not central, we can move factors of $\omega$ to the left, as a 
consequence of the next lemma. 
\begin{lemma}
Assume $\phi = p\omega q$ where $p, q$ are monomials in $\alpha,
\gamma$. Then
\[\phi = -\omega pq + \zeta \sum \phi_i,\]
where $\phi_i$ are monomials in $\alpha, \gamma$ of length $l(\phi_i)
<l(p)+l(q)$. 
\end{lemma}
We leave the proof to the reader, only pointing out that induction and
the identity $\gamma\omega = -(\omega-\zeta)$ are used. The final
crucial step is the following. 
\begin{lemma}
Assume $\phi$ is a monomial in $\{\alpha, \beta\}$. Then $\phi$ is a
linear combination of elements of the form
\[\zeta^a\omega^b \phi_1,\]
where $\phi_1$ is a monomial in $\{\alpha, \gamma\}$ of the form
\[\alpha^i(\gamma\alpha)^r (\gamma \alpha^2)^s\gamma\alpha^j
\leqno{(*)}\]
with $r\leq 2$ and $s\leq 2$ and $i, j\leq 2$.
\end{lemma}
\begin{proof}
If suffices to express $\phi$ as a combination of elements
$\zeta^a\omega^b\phi_1$ with $\phi_1$ as in $(*)$, but for arbitrary $r$
and $s$. Then the coefficients can be reduced according to the first
Lemma. Suppose therefore that $\phi$ is not of the form as in
$(*)$. Then we can write
\[\phi = p(\alpha^2\gamma\alpha\gamma)q,\] 
where $p$ and $q$ are monomials in $\{\alpha, \gamma\}$. This is equal 
to 
\[p[\omega - \alpha\gamma\alpha\gamma\alpha -
  \gamma\alpha\gamma\alpha^2])q 
= p\omega q - p\alpha\gamma\alpha\gamma\alpha q -
p\gamma\alpha\gamma\alpha^2q 
\leqno{(**)}\] 
and $p\omega q = \omega pq + \zeta \sum \phi_i$ with $l(\phi_i) <
l(p)+l(q)$. For $pq$ we use induction. The second term in $(**)$ has
one factor $\alpha^2$ less, and in the third term of $(**)$,
$\alpha^2$ occurs further to the right. So if we start with the
rightmost $\alpha^2$ in $\phi$ and iterate the above substitution,
then we can move $\alpha^2$ completely to the right. Hence we get
terms with coefficients either of shorter length, or with fewer
$\alpha^2$ and where to the right only submonomials
$\ldots\gamma\alpha\gamma\ldots$ occur, but where the length does not
increase. The claim follows now by induction.
\end{proof}
The claim that $\L$ satisfies \textbf{(Fg)} now follows immediately. 
\end{proof}

%%%%%%%%%%%%%%%%%%%%%%%%%%%%%%%%%%%%%%%%%%%%%%%%%%%%%%%%%%%%%%%%%%%

\section{Quantum exterior algebras}\label{section:qea} 

This final section is devoted to characterizing when the quantum
exterior $k$-algebra
\[\L=k\langle x_1,x_2,\ldots,
x_n\rangle/(\{x_ix_j+q_{ij}x_jx_i\}_{i<j},\{x_i^2\}_{i=1}^n, q_{ij}\in
k^*)\] 
satisfies \textbf{(Fg)} for a field $k$. This completes the discussion
of the $\widetilde{\mathbb{Z}}_d$-case, in that we treat the case
$d=0$.

It is well-known that $\L$ is a Koszul algebra and that the Koszul
dual $E(\L)$ is given by $E(\L)=k\langle x_1,x_2,\ldots,
x_n\rangle/(\{x_ix_j-q_{ij}x_jx_i\}_{i<j}, q_{ij}\in k^*)$. To answer
our question we apply Theorem \ref{thm:Fgcharacterization}, so
our first task is to study the graded centre of $E(\L)$.

Any element in $E(\L)$ can be written uniquely as a linear combination of
the elements $\{ x_1^{t_1}x_2^{t_2}\cdots x_n^{t_n}\mid t_j\geq 0\}$.
Given that $E(\L)$ is generated in degree $1$ as an algebra over $k$, to
compute the graded centre of $E(\L)$ it is necessary and sufficient to
check graded commutation with the variables
$\{x_i\}_{i=1}^n$. Multiplication with the variables $\{x_i\}_{i=1}^n$
from the left and the right on an element $x_1^{t_1}x_2^{t_2}\cdots
x_n^{t_n}$ as above is given by:
\[ x_1^{t_1}x_2^{t_2}\cdots x_n^{t_n}x_i = 
\begin{cases}
  x_1^{t_1}x_2^{t_2}\cdots x_n^{t_n+1}, & i=n,\\
  (\prod_{j=n}^{i+1}q_{ij}^{-t_j}) x_1^{t_1}x_2^{t_2}\cdots x_{i-1}^{t_{i-1}}
  x_i^{t_i+1}x_{i+1}^{t_{i+1}}\cdots x_n^{t_n}, & i<n, 
\end{cases}\]
and 
\[ x_ix_1^{t_1}x_2^{t_2}\cdots x_n^{t_n} = 
\begin{cases}
  x_1^{t_1+1}x_2^{t_2}\cdots x_n^{t_n+1}, & i=1,\\
  (\prod_{j=1}^{i-1}q_{ji}^{-t_j}) x_1^{t_1}x_2^{t_2}\cdots x_{i-1}^{t_{i-1}}
  x_i^{t_i+1}x_{i+1}^{t_{i+1}}\cdots x_n^{t_n}, & i>1.
\end{cases}\]
\begin{prop}
The quantum exterior $k$-algebra 
\[\L=k\langle x_1,x_2,\ldots,
x_n\rangle/(\{x_ix_j+q_{ij}x_jx_i\}_{i<j},\{x_i^2\}_{i=1}^n, q_{ij}\in
k^*)\] 
satisfies \emph{\textbf{(Fg)}} if and only if all the elements
$\{q_{ij}\}_{i<j}$ are roots of unity. 
\end{prop}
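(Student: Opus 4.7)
The plan is to apply Theorem \ref{thm:Fgcharacterization}: since $\L$ is Koszul, \textbf{(Fg)} is equivalent to $Z_\gr(E(\L))$ being Noetherian and $E(\L)$ being a finitely generated module over $Z_\gr(E(\L))$. The key preliminary observation is that $E(\L)$ carries a $\Z^n$-multigrading with $\deg x_i=e_i$, all relations are multi-homogeneous, and the defining condition $x_iz=(-1)^{|z|}zx_i$ of the graded centre splits under this refinement. Hence $Z_\gr(E(\L))$ is $k$-spanned by the monomials $x^t:=x_1^{t_1}\cdots x_n^{t_n}$ that are graded central, and their multi-degrees form a submonoid $L_+\subseteq\Z_{\geq 0}^n$.

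Using the left and right multiplication formulas displayed just before the proposition, $x^t$ is graded central of total degree $|t|$ iff $\prod_{j\neq i}p_{ij}^{t_j}=(-1)^{|t|}$ for every $i$, where $p_{ij}\in k^*$ is determined by $x_ix_j=p_{ij}x_jx_i$ in $E(\L)$ (so $p_{ij}=q_{ij}$ for $i<j$ and $p_{ij}=q_{ji}^{-1}$ for $i>j$). Let $T\leq k^*$ be the subgroup generated by the $p_{ij}$, $T_0$ its torsion subgroup, and $G:=T/T_0\cong\Z^r$. Reducing modulo $T_0$ kills the sign and converts centrality into the $\Z$-linear system $\sum_{j\neq i}\bar p_{ij}t_j=0$ in $G$, with integer solution lattice $L\subseteq\Z^n$. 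Note that $L$ has $\Z$-rank $n$ iff every $\bar p_{ij}$ vanishes, i.e.\ iff every $q_{ij}$ is a root of unity.

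For the forward implication, assume all $q_{ij}$ are roots of unity and pick an even $N$ with $q_{ij}^N=1$ for all $i<j$. Then $Ne_i\in L$ so each $x_i^N$ is graded central, and $p_{ij}^{N^2}=(p_{ij}^N)^N=1$ makes $k[x_1^N,\ldots,x_n^N]\subseteq Z_\gr(E(\L))$ an honest commutative polynomial subring; $E(\L)$ is finitely generated over it by the monomials $x^t$ with $0\leq t_i<N$. Theorem \ref{thm:Fgcharacterization}(b) then yields \textbf{(Fg)}.

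Conversely, if some $q_{i_0j_0}$ is not a root of unity then $r\geq 1$ and the $i_0$-th equation has a non-zero coefficient in $G$, forcing $\operatorname{rank}_\Z L\leq n-1$. Thus $L_+$ lies in a proper linear subspace of $\mathbb{R}^n$ and $|L_+\cap[0,N]^n|=O(N^{n-1})$. Were $E(\L)$ finitely generated over $Z_\gr(E(\L))$ by multi-homogeneous generators of multi-degrees $\alpha^{(1)},\ldots,\alpha^{(s)}\in\Z_{\geq 0}^n$, every $\beta\in\Z_{\geq 0}^n$ would lie in some $L_++\alpha^{(j)}$; but this union meets $[0,N]^n$ in only $O(N^{n-1})$ lattice points, whereas $\Z_{\geq 0}^n\cap[0,N]^n$ contains $(N+1)^n$, a contradiction. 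Hence \textbf{(Fg)} fails by Theorem \ref{thm:Fgcharacterization}(a). The main obstacle is the rank computation: one must translate the sign-twisted multiplicative centrality condition into a linear system over the torsion-free quotient $G$ so that ``some $q_{ij}$ is not a root of unity'' becomes literally ``$L$ is not full rank''.
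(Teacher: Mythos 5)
Your proof is correct, and it follows the same overall strategy as the paper (apply Theorem \ref{thm:Fgcharacterization}, exploit the $\Z^n$-multigrading to reduce to monomials, pick an even $N$ killing all $q_{ij}$ for the finite generation direction). The only genuine divergence is in the failure direction. Where you introduce the torsion-free quotient $G=T/T_0$, set up the $\Z$-linear system, observe that the central multi-degree monoid $L_+$ sits inside a sublattice of rank $\leq n-1$, and then run a lattice-point count in $[0,N]^n$ against finitely many translates $L_++\alpha^{(j)}$, the paper takes a shortcut: it restricts to a single coordinate axis. Namely, if some $q_{i_0j}$ is not a root of unity then $x_{i_0}^p\notin Z_\gr(E(\L))$ for any $p\geq 1$; since $E(\L)$ is a $\Z_{\geq 0}^n$-graded domain with one-dimensional graded pieces, a factorization $x_{i_0}^l=z\cdot g$ with $z$ central and $g$ a generator forces $z$ to be a scalar multiple of some $x_{i_0}^a$ with $a\geq 1$ (impossible) or $a=0$, so every $x_{i_0}^l$ must itself be a module generator. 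That gives infinite generation directly on the ray $\Z_{\geq 0}e_{i_0}$, avoiding the rank and counting considerations. Your version is heavier machinery for the same conclusion, but it has the compensating virtue of exhibiting the precise codimension obstruction (the lattice $L$ dropping rank), which is conceptually cleaner and would adapt to situations where no single variable witnesses the failure. Both arguments are sound; just be sure, when you invoke ``multi-homogeneous generators,'' to note (as you implicitly do) that each $\Z^n$-graded component of $E(\L)$ is one-dimensional, so homogeneous generators are monomials up to scalar and the membership $\beta\in L_++\alpha^{(j)}$ genuinely follows from the multigraded module equation.
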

\begin{proof}
First we characterize when $x_i^p$ is in $Z_\gr(E(\L))$, where $E(\L)$ is as
above. It follows immediately from the formulae above that
\[x_i^px_j = \begin{cases}
x_i^px_j, & i < j,\\
x_i^{p+1}, & i = j,\\
q_{ji}^{-p}x_jx_i^p, & i > j,
\end{cases}\]
and 
\[x_jx_i^p = \begin{cases}
q_{ij}^{-p}x_i^px_j, & i < j,\\
x_i^{p+1}, & i = j,\\
x_jx_i^p, & i > j.
\end{cases}\] 
From these formulae we obtain that $x_i^p$ is in $Z_\gr(E(\L))$ if and
only if 
\begin{enumerate}
\item[(i)] $1=(-1)^pq_{ij}^{-p}$ for $j>i$, 
\item[(ii)] $1=(-1)^p$ for $j=i$, 
\item[(iii)] $q_{ji}^{-p}=(-1)^p$ for $j<i$.
\end{enumerate}
For $\kar k\neq 2$, this is clearly equivalent to (A) $p$ is even and
(B) $\{q_{ij}\}_{j>i}$ are all $p$-th roots of unity. For $\kar k=2$,
then this is equivalent to just (B).

Suppose now that not all the elements $\{q_{ij}\}_{j>i}$ are roots of
unity for some $i$ in $\{1,2,\ldots,n-1\}$. Hence $x_i^p$ is not in
$Z_\gr(E(\L))$ for any $p\geq 1$. On the other hand $x_i^l$ is in
$E(\L)$ for any $l\geq 1$. But since $E(\L)$ is a domain and
$Z_\gr(E(\L))$ (and $E(\L)$) is multi-graded by $\mathbb{N}^n$, the
only way the elements $x_i^l$ can be generated by the graded centre
$Z_\gr(E(\L))$ is that it is one of the generators. Hence $E(\L)$ is
infinitely generated as a module over $Z_\gr(E(\L))$. Consequently
\textbf{(Fg)} is not satisfied for $\L=E(E(\L))$.

Suppose now that all the elements $\{q_{ij}\}_{j>i}$ are roots of
unity for all $i$ in $\{1,2,\ldots,n-1\}$. Suppose that each $q_{ij}$ is
a root of unity of degree $d_{ij}$. Let $N$ be the least common even
multiple of all the $d_{ij}$'s. From the calculations above we
infer that $x_i^N$ is in $Z_\gr(E(\L))$ for all $i$ in
$\{1,2,\ldots,n\}$. Then the set $\{x_1^{t_1}x_2^{t_2}\cdots
x_n^{t_n}\mid t_i < N, \forall i\in \{1,2,\ldots,n\}\}$ is a
generating set of $E(\L)$ as a module over the graded centre
$Z_\gr(E(\L))$. Hence $E(\L)$ is a finitely generated module over
$Z_\gr(E(\L))$, and therefore \textbf{(Fg)} is satisfied for
$\L=E(E(\L))$.
\end{proof}
This result is generalized by Bergh and Oppermann in \cite[Theorem
  5.5]{BO}.  
\begin{remark}
The Nakayama automorphism $\nu$ of $\L$ is given by
$\nu(x_i)=(-1)^{n-1}\frac{\prod_{j=i+1}^n
  q_{ij}}{\prod_{j=1}^{i-1}q_{ji}}x_i$ for $i=1,2,\ldots,n$. For
$n=3$, we can choose $q_{12}=q$, $q_{13}=q^{-1}$ and $q_{23}=q_{12}$,
where $q$ is not a root of unity. Then the Nakayama automorphism $\nu$
is the identity, while \textbf{(Fg)} is not satisfied. Hence the
property \textbf{(Fg)} is not linked to the Nakayama automorphism
being of finite order, or even the identity.
\end{remark}

\end{document}